\newtheorem{theorem}{Theorem}
\theoremstyle{plain}
\newtheorem{corollary}{Corollary}
\newtheorem{lemma}{Lemma}
\newtheorem{remark}{Remark}
\numberwithin{equation}{section}
\begin{document}
\title[Levin-Ste\v{c}kin's Type Inequalities for Operator Convex Functions]{%
Some Levin-Ste\v{c}kin's Type Inequalities for Operator Convex Functions on
Hilbert Spaces}
\author[S. S. Dragomir]{Silvestru Sever Dragomir$^{1,2}$}
\address{$^{1}$Mathematics, College of Engineering \& Science\\
Victoria University, PO Box 14428\\
Melbourne City, MC 8001, Australia.}
\email{sever.dragomir@vu.edu.au}
\urladdr{http://rgmia.org/dragomir}
\address{$^{2}$DST-NRF Centre of Excellence in the Mathematical \\
and Statistical Sciences, School of Computer Science\\
\& Applied Mathematics, University of the Witwatersrand, \\
Johannesburg, South Africa.}
\subjclass{47A63, 26D15, 26D10.}
\keywords{Operator convex functions, Integral inequalities, Hermite-Hadamard
inequality, F\'{e}jer's inequalities, Levin-Ste\v{c}kin's inequality.}

\begin{abstract}
Let $f$ be an operator convex function on $I$ and $A,$ $B\in \mathcal{SA}%
_{I}\left( H\right) ,$ the class of all selfadjoint operators with spectra
in $I.$ Assume that $p:\left[ 0,1\right] \rightarrow \mathbb{R}$ is
symmetric and non-decreasing on $\left[ 0,1/2\right] $. In this paper we
obtained, among others, that%
\begin{align*}
0& \leq \int_{0}^{1}p\left( t\right) dt\int_{0}^{1}f\left( tA+\left(
1-t\right) B\right) dtdt-\int_{0}^{1}p\left( t\right) f\left( tA+\left(
1-t\right) B\right) dt \\
& \leq \frac{1}{4}\left[ p\left( \frac{1}{2}\right) -p\left( 0\right) \right]
\left[ \frac{f\left( A\right) +f\left( B\right) }{2}-f\left( \frac{A+B}{2}%
\right) \right] 
\end{align*}%
in the operator order.

Several other similar inequalities for either $p$ or $f$ is differentiable,
are also provided. Applications for power function and logarithm are given
as well.
\end{abstract}

\maketitle

\section{Introduction}

A real valued continuous function $f$ on an interval $I$ is said to be 
\textit{operator convex (operator concave)} on $I$ if 
\begin{equation}
f\left( \left( 1-\lambda \right) A+\lambda B\right) \leq \left( \geq \right)
\left( 1-\lambda \right) f\left( A\right) +\lambda f\left( B\right)
\label{OC}
\end{equation}%
in the operator order, for all $\lambda \in \left[ 0,1\right] $ and for
every selfadjoint operator $A$ and $B$ on a Hilbert space $H$ whose spectra
are contained in $I.$ Notice that a function $f$ is operator concave if $-f$
is operator convex.

A real valued continuous function $f$ on an interval $I$ is said to be 
\textit{operator monotone} if it is monotone with respect to the operator
order, i.e., $A\leq B$ with $\limfunc{Sp}\left( A\right) ,$ $\limfunc{Sp}%
\left( B\right) \subset I$ imply $f\left( A\right) \leq f\left( B\right) .$

For some fundamental results on operator convex (operator concave) and
operator monotone functions, see \cite{FMPS} and the references therein.

As examples of such functions, we note that $f\left( t\right) =t^{r}$ is
operator monotone on $[0,\infty )$ if and only if $0\leq r\leq 1.$ The
function $f\left( t\right) =t^{r}$ is operator convex on $(0,\infty )$ if
either $1\leq r\leq 2$ or $-1\leq r\leq 0$ and is operator concave on $%
\left( 0,\infty \right) $ if $0\leq r\leq 1.$ The logarithmic function $%
f\left( t\right) =\ln t$ is operator monotone and operator concave on $%
(0,\infty ).$ The entropy function $f\left( t\right) =-t\ln t$ is operator
concave on $(0,\infty ).$ The exponential function $f\left( t\right) =e^{t}$
is neither operator convex nor operator monotone.

In \cite{DSSH} we obtained among others the following Hermite-Hadamard type
inequalities for operator convex functions $f:I\rightarrow \mathbb{R}$%
\begin{equation}
f\left( \frac{A+B}{2}\right) \leq \int_{0}^{1}f\left( \left( 1-s\right)
A+sB\right) ds\leq \frac{f\left( A\right) +f\left( B\right) }{2},
\label{e.1}
\end{equation}%
where $A,$ $B$ are selfadjoint operators with spectra included in $I.$

From the operator convexity of the function $f$ we have%
\begin{align}
f\left( \frac{A+B}{2}\right) & \leq \frac{1}{2}\left[ f\left( \left(
1-s\right) A+sB\right) +f\left( sA+\left( 1-s\right) B\right) \right]
\label{e.2} \\
& \leq \frac{f\left( A\right) +f\left( B\right) }{2}  \notag
\end{align}%
for all $s\in \left[ 0,1\right] $ and $A,$ $B$ selfadjoint operators with
spectra included in $I.$

If $p:\left[ 0,1\right] \rightarrow \lbrack 0,\infty )$ is Lebesgue
integrable and symmetric in the sense that $p\left( 1-s\right) =p\left(
s\right) $ for all $s\in \left[ 0,1\right] ,$ then by multiplying (\ref{e.2}%
) with $p\left( s\right) ,$ integrating on $\left[ 0,1\right] $ and taking
into account that 
\begin{equation*}
\int_{0}^{1}p\left( s\right) f\left( \left( 1-s\right) A+sB\right)
ds=\int_{0}^{1}p\left( s\right) f\left( sA+\left( 1-s\right) B\right) ds,
\end{equation*}
we get the weighted version of (\ref{e.1}) for $A,$ $B$ selfadjoint
operators with spectra included in $I$%
\begin{align}
\left( \int_{0}^{1}p\left( s\right) ds\right) f\left( \frac{A+B}{2}\right) &
\leq \int_{0}^{1}p\left( s\right) f\left( sA+\left( 1-s\right) B\right) ds
\label{e.3} \\
& \leq \left( \int_{0}^{1}p\left( s\right) ds\right) \frac{f\left( A\right)
+f\left( B\right) }{2},  \notag
\end{align}%
which are the operator version of the well known \textit{F\'{e}jer's
inequalities} for scalar convex functions.

For recent inequalities for operator convex functions see \cite{AD}-\cite{BT}%
, \cite{DDT}, \cite{DSSS}-\cite{G1}, and \cite{P}-\cite{W1}.

The following result is known in the literature as Levin-Ste\v{c}kin's
inequality \cite{LS}:

\begin{theorem}
\label{t.1}If the function $p:\left[ 0,1\right] \rightarrow \mathbb{R}$ is
symmetric, namely $p\left( 1-t\right) =p\left( t\right) $ for $t\in \left[
0,1\right] $ and non-decreasing (non-increasing) on $\left[ 0,1/2\right] ,$
then for every convex function $g$ on $\left[ 0,1\right] ,$ 
\begin{equation}
\int_{0}^{1}p\left( t\right) g\left( t\right) dt\leq \left( \geq \right)
\int_{0}^{1}p\left( t\right) dt\int_{0}^{1}g\left( t\right) dt.  \tag{LS}
\label{LS}
\end{equation}%
If the function $g$ is concave on $\left[ 0,1\right] ,$ then the signs of
inequalities reverse in (\ref{LS}).
\end{theorem}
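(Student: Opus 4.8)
The plan is to prove the non-decreasing/convex case directly; the remaining three cases then follow by replacing $p$ with $-p$ or $g$ with $-g$. Set $P:=\int_{0}^{1}p\left( t\right) dt$, so that the assertion is equivalent to $\int_{0}^{1}\left( p\left( t\right) -P\right) g\left( t\right) dt\leq 0$. The device I would use is the antiderivative of the \emph{centered} weight,
\begin{equation*}
Q\left( t\right) :=\int_{0}^{t}\left( p\left( s\right) -P\right) ds,\qquad t\in \left[ 0,1\right] .
\end{equation*}
Immediately $Q\left( 0\right) =0$ and $Q\left( 1\right) =\int_{0}^{1}p-P=0$. The symmetry $p\left( 1-s\right) =p\left( s\right) $ gives, after the substitution $s\mapsto 1-s$, the antisymmetry relation $Q\left( 1-t\right) =-Q\left( t\right) $; in particular $Q\left( 1/2\right) =0$.

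The key step is to pin down the sign of $Q$. Since $Q^{\prime }=p-P$ and $p$ is non-decreasing on $\left[ 0,1/2\right] $, the function $Q$ is convex on $\left[ 0,1/2\right] $, and because it vanishes at both endpoints $0$ and $1/2$ it must lie below the (zero) chord, so $Q\left( t\right) \leq 0$ on $\left[ 0,1/2\right] $. The antisymmetry then yields $Q\left( t\right) \geq 0$ on $\left[ 1/2,1\right] $. Next I would integrate by parts: using that $g$ is convex (hence continuous on $\left( 0,1\right) $, absolutely continuous on compact subintervals, with a non-decreasing derivative $g^{\prime }$ that exists a.e.\ and satisfies $g\left( x\right) -g\left( 0\right) =\int_{0}^{x}g^{\prime }$), and that $Q$ is absolutely continuous with $Q\left( 0\right) =Q\left( 1\right) =0$, the boundary term drops out and
\begin{equation*}
\int_{0}^{1}\left( p\left( t\right) -P\right) g\left( t\right) dt=\int_{0}^{1}Q^{\prime }\left( t\right) g\left( t\right) dt=-\int_{0}^{1}Q\left( t\right) g^{\prime }\left( t\right) dt.
\end{equation*}

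It remains to show $\int_{0}^{1}Q\left( t\right) g^{\prime }\left( t\right) dt\geq 0$. Splitting at $1/2$ and applying $t\mapsto 1-t$ together with $Q\left( 1-t\right) =-Q\left( t\right) $ to the second half folds the integral onto $\left[ 0,1/2\right] $:
\begin{equation*}
\int_{0}^{1}Q\left( t\right) g^{\prime }\left( t\right) dt=\int_{0}^{1/2}Q\left( t\right) \left[ g^{\prime }\left( t\right) -g^{\prime }\left( 1-t\right) \right] dt.
\end{equation*}
For $t\in \left[ 0,1/2\right] $ we have $t\leq 1-t$, so the monotonicity of $g^{\prime }$ forces $g^{\prime }\left( t\right) -g^{\prime }\left( 1-t\right) \leq 0$, while $Q\left( t\right) \leq 0$ on the same interval; the integrand is thus a product of two non-positive quantities and is non-negative. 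This gives $\int_{0}^{1}Q\,g^{\prime }\geq 0$, hence $\int_{0}^{1}\left( p-P\right) g\leq 0$, which is exactly (\ref{LS}). I expect the only genuine subtlety to be the regularity bookkeeping for the integration by parts when $g$ is merely convex (replacing $g^{\prime }$ by the right derivative and invoking absolute continuity); the sign of $Q$ and its matching with the sign of $g^{\prime }\left( t\right) -g^{\prime }\left( 1-t\right) $ at the common break point $t=1/2$ is the conceptual heart of the argument, and everything else is a short symmetry computation.
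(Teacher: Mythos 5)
Your proof is correct, but there is no internal argument to compare it with: the paper does not prove Theorem \ref{t.1} at all, quoting it as the classical Levin--Ste\v{c}kin inequality with a citation to \cite{LS} and then using it as a black box in the proof of Theorem \ref{t.2.2}. Your write-up therefore supplies what the paper omits, and the reasoning is sound at every step: the centered antiderivative $Q(t)=\int_{0}^{t}\left( p(s)-P\right) ds$ satisfies $Q(0)=Q(1)=0$ and, by the symmetry of $p$, the antisymmetry $Q(1-t)=-Q(t)$, so $Q(1/2)=0$; since $Q$ is an integral of the a.e.\ non-decreasing function $p-P$ on $\left[ 0,1/2\right] $ it is convex there, and vanishing at both endpoints of $\left[ 0,1/2\right] $ forces $Q\leq 0$ on $\left[ 0,1/2\right] $ and hence $Q\geq 0$ on $\left[ 1/2,1\right] $; the folding identity $\int_{0}^{1}Qg^{\prime }=\int_{0}^{1/2}Q(t)\left[ g^{\prime }(t)-g^{\prime }(1-t)\right] dt$ then exhibits a product of two non-positive factors, giving $\int_{0}^{1}\left( p-P\right) g=-\int_{0}^{1}Qg^{\prime }\leq 0$. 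The regularity point you flag is genuine but harmless: a finite convex $g$ on $\left[ 0,1\right] $ is absolutely continuous on compact subintervals of $\left( 0,1\right) $ with monotone, integrable derivative and finite one-sided limits at $0$ and $1$; since $p$ is bounded (being monotone on each half and symmetric), $Q$ is Lipschitz and vanishes at the endpoints, so the boundary terms in the integration by parts disappear in the limit, and altering $g$ at the two endpoints changes neither side of (\ref{LS}). The reduction of the non-increasing and concave cases to the one you prove, via $p\mapsto -p$ and $g\mapsto -g$, is also correct. One small remark: your argument in fact only uses that $p-P$ has non-decreasing a.e.\ representative on $\left[ 0,1/2\right] $, and it yields the sharper weighted-\v{C}eby\v{s}ev-type statement $\int_{0}^{1}pg-P\int_{0}^{1}g=-\int_{0}^{1}Qg^{\prime }$ with an explicit sign, which is precisely the kind of identity the paper later estimates (via Gr\"{u}ss, Ostrowski, \v{C}eby\v{s}ev and Lupa\c{s} bounds) to obtain its reverse inequalities.
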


For some recent results related to Levin-Ste\v{c}kin's inequality, see \cite%
{M1}, \cite{M2} and \cite{Wi}.

Motivated by the above operator inequalities, we provide in this paper the
operator version of Levin-Ste\v{c}kin's inequality as well as several
reverses. Applications for power function and logarithm are also given.

\section{Operator Inequalities}

Let $f$ be an operator convex function on $I.$ For $A,$ $B\in \mathcal{SA}%
_{I}\left( H\right) ,$ the class of all selfadjoint operators with spectra
in $I,$ we consider the auxiliary function $\varphi _{\left( A,B\right) }:%
\left[ 0,1\right] \rightarrow \mathcal{SA}\left( H\right) ,$ the class of
all selfadjoint operators on $H$, defined by 
\begin{equation}
\varphi _{\left( A,B\right) }\left( t\right) :=f\left( \left( 1-t\right)
A+tB\right) .  \label{e.2.1}
\end{equation}%
For $x\in H$ we can also consider the auxiliary function $\varphi _{\left(
A,B\right) ;x}:\left[ 0,1\right] \rightarrow \mathbb{R}$ defined by 
\begin{equation}
\varphi _{\left( A,B\right) ;x}\left( t\right) :=\left\langle \varphi
_{\left( A,B\right) }\left( t\right) x,x\right\rangle =\left\langle f\left(
\left( 1-t\right) A+tB\right) x,x\right\rangle .  \label{e.2.2}
\end{equation}

We have the following basic fact \cite{DSSRF}:

\begin{lemma}
\label{l.2.1}Let $f$ be an operator convex function on $I.$ For any $A,$ $%
B\in \mathcal{SA}_{I}\left( H\right) ,$ $\varphi _{\left( A,B\right) }$ is
well defined and convex in the operator order. For any $\left( A,B\right)
\in \mathcal{SA}_{I}\left( H\right) $ and $x\in H$ the function $\varphi
_{\left( A,B\right) ;x}$ is convex in the usual sense on $\left[ 0,1\right]
. $
\end{lemma}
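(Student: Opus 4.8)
The plan is to verify the three assertions in turn, reducing everything to the definition of operator convexity in (\ref{OC}).

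First I would settle well-definedness. Fix $A,B\in\mathcal{SA}_{I}\left( H\right) $ and choose a closed subinterval $\left[ a,b\right] \subseteq I$ containing both $\limfunc{Sp}\left( A\right) $ and $\limfunc{Sp}\left( B\right) $; this is possible since the spectra are compact subsets of $I$. Then $a\cdot 1_{H}\leq A\leq b\cdot 1_{H}$ and $a\cdot 1_{H}\leq B\leq b\cdot 1_{H}$, and taking the convex combination with weights $1-t$ and $t$ gives
\begin{equation*}
a\cdot 1_{H}\leq \left( 1-t\right) A+tB\leq b\cdot 1_{H}
\end{equation*}
for every $t\in \left[ 0,1\right] $. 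Hence $\left( 1-t\right) A+tB$ is selfadjoint with spectrum in $\left[ a,b\right] \subseteq I$, so $f\left( \left( 1-t\right) A+tB\right) $ is defined and $\varphi _{\left( A,B\right) }\left( t\right) \in \mathcal{SA}\left( H\right) $.

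Next I would prove operator convexity of $\varphi _{\left( A,B\right) }$. The computational heart is the elementary identity: for $s,t\in \left[ 0,1\right] $ and $\lambda \in \left[ 0,1\right] $, writing $u:=\lambda s+\left( 1-\lambda \right) t$,
\begin{equation*}
\lambda \left[ \left( 1-s\right) A+sB\right] +\left( 1-\lambda \right) \left[ \left( 1-t\right) A+tB\right] =\left( 1-u\right) A+uB,
\end{equation*}
which follows by collecting the coefficients of $A$ and $B$. Setting $C:=\left( 1-s\right) A+sB$ and $D:=\left( 1-t\right) A+tB$, both lie in $\mathcal{SA}_{I}\left( H\right) $ by the first step, so the operator convexity hypothesis (\ref{OC}) applied to $C$ and $D$ gives
\begin{equation*}
\varphi _{\left( A,B\right) }\left( u\right) =f\left( \lambda C+\left( 1-\lambda \right) D\right) \leq \lambda f\left( C\right) +\left( 1-\lambda \right) f\left( D\right) =\lambda \varphi _{\left( A,B\right) }\left( s\right) +\left( 1-\lambda \right) \varphi _{\left( A,B\right) }\left( t\right)
\end{equation*}
in the operator order, which is exactly convexity of $\varphi _{\left( A,B\right) }$.

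Finally, for $x\in H$ I would pass to the scalar function by taking the inner product with $x$. Since $P\leq Q$ in the operator order implies $\left\langle Px,x\right\rangle \leq \left\langle Qx,x\right\rangle $, the operator inequality just established yields
\begin{equation*}
\varphi _{\left( A,B\right) ;x}\left( u\right) \leq \lambda \varphi _{\left( A,B\right) ;x}\left( s\right) +\left( 1-\lambda \right) \varphi _{\left( A,B\right) ;x}\left( t\right) ,
\end{equation*}
which is ordinary convexity of $\varphi _{\left( A,B\right) ;x}$ on $\left[ 0,1\right] $. The only genuinely delicate point is the first step: one must guarantee that the intermediate operators $\left( 1-t\right) A+tB$, $C$, and $D$ all have spectra inside $I$, for otherwise neither $f$ evaluated at them nor the convexity definition (\ref{OC}) would be meaningful. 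Everything else is a direct translation of the scalar convexity bookkeeping into the operator setting.
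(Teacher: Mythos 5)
Your proof is correct, but there is nothing in the paper to compare it against: the paper does not prove Lemma \ref{l.2.1} at all, it quotes it as a known basic fact from the earlier work cited as \cite{DSSRF} (Dragomir, \emph{Reverses of operator Fej\'{e}r's inequalities}). Your self-contained argument is the natural one and it is complete. Step (a): since $\limfunc{Sp}\left( A\right) \cup \limfunc{Sp}\left( B\right) $ is compact and $I$ is an interval, its minimum $a$ and maximum $b$ give $\left[ a,b\right] \subseteq I$, and $a1_{H}\leq \left( 1-t\right) A+tB\leq b1_{H}$ forces the spectrum of $\left( 1-t\right) A+tB$ into $\left[ a,b\right] \subseteq I$, so $\varphi _{\left( A,B\right) }$ is well defined via the functional calculus and takes selfadjoint values. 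Step (b): the affine identity $\lambda \left[ \left( 1-s\right) A+sB\right] +\left( 1-\lambda \right) \left[ \left( 1-t\right) A+tB\right] =\left( 1-u\right) A+uB$ with $u=\lambda s+\left( 1-\lambda \right) t$, combined with (\ref{OC}) applied to the intermediate operators $C,D\in \mathcal{SA}_{I}\left( H\right) $, yields convexity of $\varphi _{\left( A,B\right) }$ in the operator order. Step (c): since $P\leq Q$ means exactly $\left\langle Px,x\right\rangle \leq \left\langle Qx,x\right\rangle $ for all $x\in H$, the scalar convexity of $\varphi _{\left( A,B\right) ;x}$ follows immediately. You are also right to single out the spectral containment as the one genuinely delicate point: it is what licenses both the evaluation of $f$ at the intermediate operators and the application of (\ref{OC}) to them, and it is the part most often glossed over in the literature.
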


A continuous function $g:\mathcal{SA}_{I}\left( H\right) \rightarrow 
\mathcal{B}\left( H\right) $ is said to be \textit{G\^{a}teaux differentiable%
} in $A\in \mathcal{SA}_{I}\left( H\right) $ along the direction $B\in 
\mathcal{B}\left( H\right) $ if the following limit exists in the strong
topology of $\mathcal{B}\left( H\right) $%
\begin{equation}
\nabla g_{A}\left( B\right) :=\lim_{s\rightarrow 0}\frac{g\left( A+sB\right)
-g\left( A\right) }{s}\in \mathcal{B}\left( H\right) .  \label{e.2.3}
\end{equation}%
If the limit (\ref{e.2.3}) exists for all $B\in \mathcal{B}\left( H\right) ,$
then we say that $f$ is \textit{G\^{a}teaux differentiable} in $A$ and we
can write $g\in \mathcal{G}\left( A\right) .$ If this is true for any $A$ in
an open set $\mathcal{S}$ from $\mathcal{SA}_{I}\left( H\right) $ we write
that $g\in \mathcal{G}\left( \mathcal{S}\right) .$

If $g$ is a continuous function on $I,$ by utilising the continuous
functional calculus the corresponding function of operators will be denoted
in the same way.

For two distinct operators $A,$ $B\in \mathcal{SA}_{I}\left( H\right) $ we
consider the segment of selfadjoint operators 
\begin{equation*}
\left[ A,B\right] :=\left\{ \left( 1-t\right) A+tB\text{ }|\text{ }t\in %
\left[ 0,1\right] \right\} .
\end{equation*}%
We observe that $A,$ $B\in \left[ A,B\right] $ and $\left[ A,B\right]
\subset \mathcal{SA}_{I}\left( H\right) .$

We also have \cite{DSSRF}:

\begin{lemma}
\label{l.2.2}Let $f$ be an operator convex function on $I$ and $A,$ $B\in 
\mathcal{SA}_{I}\left( H\right) ,$ with $A\neq B.$ If $f\in \mathcal{G}%
\left( \left[ A,B\right] \right) ,$ then the auxiliary function $\varphi
_{\left( A,B\right) }$ is differentiable on $\left( 0,1\right) $ and 
\begin{equation}
\varphi _{\left( A,B\right) }^{\prime }\left( t\right) =\nabla f_{\left(
1-t\right) A+tB}\left( B-A\right) .  \label{e.2.4}
\end{equation}%
Also we have for the lateral derivative that%
\begin{equation}
\varphi _{\left( A,B\right) }^{\prime }\left( 0+\right) =\nabla f_{A}\left(
B-A\right)  \label{e.2.5}
\end{equation}%
and%
\begin{equation}
\varphi _{\left( A,B\right) }^{\prime }\left( 1-\right) =\nabla f_{B}\left(
B-A\right) .  \label{e.2.6}
\end{equation}
\end{lemma}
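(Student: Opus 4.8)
The plan is to compute the derivative of $\varphi _{\left( A,B\right) }$ directly from the definition, exploiting the fact that the curve $t\mapsto \left( 1-t\right) A+tB$ is affine in $t$. The crucial observation is the algebraic identity
\begin{equation*}
\left( 1-\left( t+h\right) \right) A+\left( t+h\right) B=\left[ \left(
1-t\right) A+tB\right] +h\left( B-A\right) ,
\end{equation*}
valid for all $t$ and $h$. Writing $C\left( t\right) :=\left( 1-t\right)
A+tB$, this says $C\left( t+h\right) =C\left( t\right) +h\left( B-A\right) $,
so that perturbing the argument of $f$ amounts to a step of size $h$ in the
fixed direction $B-A$.

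First I would fix $t\in \left( 0,1\right) $ and form the difference quotient
\begin{equation*}
\frac{\varphi _{\left( A,B\right) }\left( t+h\right) -\varphi _{\left(
A,B\right) }\left( t\right) }{h}=\frac{f\left( C\left( t\right) +h\left(
B-A\right) \right) -f\left( C\left( t\right) \right) }{h}.
\end{equation*}
For $\left\vert h\right\vert $ small enough we have $t+h\in \left(
0,1\right) $, hence $C\left( t+h\right) \in \left[ A,B\right] \subset
\mathcal{SA}_{I}\left( H\right) $, and the right-hand side is well defined
through the continuous functional calculus. Comparing with the definition $
\left( \ref{e.2.3}\right) $ of the G\^{a}teaux derivative, with the base
point there replaced by $C\left( t\right) =\left( 1-t\right) A+tB$, the
direction replaced by $B-A\in \mathcal{B}\left( H\right) $, and the scalar $s
$ replaced by $h$, this difference quotient is precisely the one whose limit
defines $\nabla f_{\left( 1-t\right) A+tB}\left( B-A\right) $. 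Since $C\left(
t\right) \in \left[ A,B\right] $ and $f\in \mathcal{G}\left( \left[ A,B\right]
\right) $, the two-sided limit as $h\rightarrow 0$ exists in the strong
operator topology and equals $\nabla f_{\left( 1-t\right) A+tB}\left(
B-A\right) $; this yields $\left( \ref{e.2.4}\right) $.

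For the endpoints I would repeat the computation with $t=0$ and $t=1$, where
$C\left( 0\right) =A$ and $C\left( 1\right) =B$. Here only one-sided
increments keep the argument inside $\left[ 0,1\right] $, so the difference
quotient produces the lateral derivatives $\varphi _{\left( A,B\right)
}^{\prime }\left( 0+\right) $ and $\varphi _{\left( A,B\right) }^{\prime
}\left( 1-\right) $. Because $f\in \mathcal{G}\left( \left[ A,B\right]
\right) $ guarantees that the full (two-sided) G\^{a}teaux derivatives $
\nabla f_{A}\left( B-A\right) $ and $\nabla f_{B}\left( B-A\right) $ already
exist, the one-sided limits automatically coincide with them, giving $\left(
\ref{e.2.5}\right) $ and $\left( \ref{e.2.6}\right) $.

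The computation itself is essentially a change of variable, so the only
genuinely delicate points are topological rather than algebraic: one must
check that the limit is taken in the strong topology of $\mathcal{B}\left(
H\right) $ and that the increments $C\left( t\pm h\right) $ remain in $
\mathcal{SA}_{I}\left( H\right) $ so that $f$ applied to them is defined. I
expect the main thing to watch is the endpoint argument, where the lateral
derivative of $\varphi _{\left( A,B\right) }$ (a one-sided object) must be
matched with the two-sided G\^{a}teaux derivative of $f$; here the existence
supplied by the hypothesis $f\in \mathcal{G}\left( \left[ A,B\right] \right) $
, together with the convexity of $\varphi _{\left( A,B\right) }$ from Lemma
\ref{l.2.1} which ensures the lateral derivatives exist in the first place,
removes any ambiguity.
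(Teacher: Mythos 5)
Your proof is correct. Note that the paper itself does not prove Lemma \ref{l.2.2}: it is quoted from the reference \cite{DSSRF}, so there is no in-text proof to compare against. Your direct verification is the natural argument: the affine identity $\left( 1-\left( t+h\right) \right) A+\left( t+h\right) B=\left[ \left( 1-t\right) A+tB\right] +h\left( B-A\right) $ turns the difference quotient of $\varphi _{\left( A,B\right) }$ into exactly the quotient in the definition (\ref{e.2.3}) of $\nabla f_{\left( 1-t\right) A+tB}\left( B-A\right) $, whose strong limit exists because $\left( 1-t\right) A+tB\in \left[ A,B\right] $ and $f\in \mathcal{G}\left( \left[ A,B\right] \right) $; and at the endpoints the existence of the two-sided G\^{a}teaux limits at $A$ and $B$ a fortiori gives the one-sided limits, which is all that (\ref{e.2.5}) and (\ref{e.2.6}) assert. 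The only superfluous element is your closing appeal to the convexity of $\varphi _{\left( A,B\right) }$ from Lemma \ref{l.2.1} to guarantee existence of the lateral derivatives: that existence already follows from the hypothesis $f\in \mathcal{G}\left( \left[ A,B\right] \right) $, since a two-sided limit always yields both one-sided limits.
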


and

\begin{lemma}
\label{l.2.3}Let $f$ be an operator convex function on $I$ and $A,$ $B\in 
\mathcal{SA}_{I}\left( H\right) ,$ with $A\neq B.$ If $f\in \mathcal{G}%
\left( \left[ A,B\right] \right) ,$ then for $0<t_{1}<t_{2}<1$ we have 
\begin{equation}
\nabla g_{\left( 1-t_{1}\right) A+t_{1}B}\left( B-A\right) \leq \nabla
g_{\left( 1-t_{2}\right) A+t_{2}B}\left( B-A\right)  \label{e.2.8}
\end{equation}%
in the operator order.

We also have 
\begin{equation}
\nabla f_{A}\left( B-A\right) \leq \nabla g_{\left( 1-t_{1}\right)
A+t_{1}B}\left( B-A\right)  \label{e.2.8.a}
\end{equation}%
and%
\begin{equation}
\nabla g_{\left( 1-t_{2}\right) A+t_{2}B}\left( B-A\right) \leq \nabla
f_{B}\left( B-A\right) .  \label{e.2.8.b}
\end{equation}
\end{lemma}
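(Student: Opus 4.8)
The plan is to deduce the operator inequalities from the one-variable theory of convex functions by testing against vectors $x\in H$. First I would fix an arbitrary $x\in H$ and pass to the scalar auxiliary function $\varphi _{\left( A,B\right) ;x}$ introduced in (\ref{e.2.2}). By Lemma \ref{l.2.1} this function is convex on $\left[ 0,1\right] $ in the ordinary sense, so the classical theory of one-variable convex functions guarantees that its derivative $\varphi _{\left( A,B\right) ;x}^{\prime }$ exists on $\left( 0,1\right) $ and is non-decreasing there, and that the one-sided derivatives at the endpoints control the interior ones, namely $\varphi _{\left( A,B\right) ;x}^{\prime }\left( 0+\right) \leq \varphi _{\left( A,B\right) ;x}^{\prime }\left( t\right) \leq \varphi _{\left( A,B\right) ;x}^{\prime }\left( 1-\right) $ for every $t\in \left( 0,1\right) $.

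The next step is to identify this scalar derivative with the quadratic form of the operator derivative. Since the G\^{a}teaux limit in (\ref{e.2.3}) is taken in the strong topology of $\mathcal{B}\left( H\right) $ and the inner product is continuous, differentiating inside $\left\langle \cdot \,x,x\right\rangle $ is legitimate, and invoking (\ref{e.2.4}) of Lemma \ref{l.2.2} I would obtain
\[
\varphi _{\left( A,B\right) ;x}^{\prime }\left( t\right) =\left\langle \varphi _{\left( A,B\right) }^{\prime }\left( t\right) x,x\right\rangle =\left\langle \nabla f_{\left( 1-t\right) A+tB}\left( B-A\right) x,x\right\rangle
\]
for each $t\in \left( 0,1\right) $. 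Likewise, (\ref{e.2.5}) and (\ref{e.2.6}) give $\varphi _{\left( A,B\right) ;x}^{\prime }\left( 0+\right) =\left\langle \nabla f_{A}\left( B-A\right) x,x\right\rangle $ and $\varphi _{\left( A,B\right) ;x}^{\prime }\left( 1-\right) =\left\langle \nabla f_{B}\left( B-A\right) x,x\right\rangle $.

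Combining the two steps, for $0<t_{1}<t_{2}<1$ the monotonicity of $\varphi _{\left( A,B\right) ;x}^{\prime }$ yields
\[
\left\langle \nabla f_{\left( 1-t_{1}\right) A+t_{1}B}\left( B-A\right) x,x\right\rangle \leq \left\langle \nabla f_{\left( 1-t_{2}\right) A+t_{2}B}\left( B-A\right) x,x\right\rangle ,
\]
while the endpoint bounds give the analogous scalar inequalities with $\nabla f_{A}\left( B-A\right) $ on the left and $\nabla f_{B}\left( B-A\right) $ on the right. Since $x\in H$ was arbitrary, the characterisation of the operator order by quadratic forms ($S\leq T$ if and only if $\left\langle Sx,x\right\rangle \leq \left\langle Tx,x\right\rangle $ for all $x\in H$) promotes each scalar inequality to the operator inequalities (\ref{e.2.8}), (\ref{e.2.8.a}) and (\ref{e.2.8.b}).

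I expect the only delicate point to be the interchange in the second step, that is, the claim that the quadratic form of the strong-limit G\^{a}teaux derivative coincides with the ordinary derivative of $\varphi _{\left( A,B\right) ;x}$; this is precisely where the strong-topology convergence in (\ref{e.2.3}) is needed. Once that identification is secured, the rest is a routine transfer of the scalar monotonicity of derivatives of convex functions into the operator setting.
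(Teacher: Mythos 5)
Your proposal is correct, but a caveat on the comparison: this paper never proves Lemma \ref{l.2.3} at all --- it is imported, together with Lemmas \ref{l.2.1} and \ref{l.2.2}, from \cite{DSSRF}. Measured against the techniques the paper does use, your route is exactly the house method: in the proof of Theorem \ref{t.2.3} the author likewise scalarizes via $t\mapsto \left\langle \varphi _{\left( A,B\right) }\left( t\right) x,x\right\rangle $, identifies the scalar derivative with the quadratic form $\left\langle \nabla f_{\left( 1-t\right) A+tB}\left( B-A\right) x,x\right\rangle $ through Lemma \ref{l.2.2} (see (\ref{e.3.8.a})), invokes monotonicity of derivatives of scalar convex functions, and lifts the resulting scalar inequalities back to the operator order by the quadratic-form characterization. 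So your argument is the natural reconstruction of what the cited source does, not a genuinely different route. (You also correctly read the ``$\nabla g$'' in (\ref{e.2.8})--(\ref{e.2.8.b}) as a typo for ``$\nabla f$''.)

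One imprecision to repair in the write-up: convexity of $\varphi _{\left( A,B\right) ;x}$ alone does \emph{not} give differentiability at every point of $\left( 0,1\right) $ --- only one-sided derivatives everywhere and differentiability almost everywhere --- so your opening appeal to ``classical theory'' overstates it. The gap is harmless because your second step independently supplies the missing differentiability: the hypothesis $f\in \mathcal{G}\left( \left[ A,B\right] \right) $ and Lemma \ref{l.2.2} give existence of $\varphi _{\left( A,B\right) }^{\prime }\left( t\right) $ as a strong limit, and continuity of the inner product then yields existence of $\varphi _{\left( A,B\right) ;x}^{\prime }\left( t\right) $ for every $t\in \left( 0,1\right) $, together with the endpoint identifications from (\ref{e.2.5}) and (\ref{e.2.6}). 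With differentiability secured this way, the standard chain $\varphi _{\left( A,B\right) ;x}^{\prime }\left( 0+\right) \leq \varphi _{\left( A,B\right) ;x}^{\prime }\left( t_{1}\right) \leq \varphi _{\left( A,B\right) ;x}^{\prime }\left( t_{2}\right) \leq \varphi _{\left( A,B\right) ;x}^{\prime }\left( 1-\right) $ for convex functions applies, and the passage to the operator order is legitimate since, on a complex Hilbert space, nonnegativity of the quadratic form of a difference already forces it to be a positive (hence selfadjoint) operator. Reorder the argument so that differentiability is derived from Lemma \ref{l.2.2} first, and the proof is complete.
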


In particular, we observe that:

\begin{corollary}
\label{c.2.1}Let $f$ be an operator convex function on $I$ and $A,$ $B\in 
\mathcal{SA}_{I}\left( H\right) ,$ with $A\neq B.$ If $f\in \mathcal{G}%
\left( \left[ A,B\right] \right) $, then for all $t\in \left( 0,1\right) $
we have 
\begin{equation}
\nabla f_{A}\left( B-A\right) \leq \nabla f_{\left( 1-t\right) A+tB}\left(
B-A\right) \leq \nabla f_{B}\left( B-A\right) .  \label{e.2.10}
\end{equation}
\end{corollary}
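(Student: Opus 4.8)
The plan is to read this off directly from Lemma \ref{l.2.3}, since Corollary \ref{c.2.1} is exactly the two one-sided estimates (\ref{e.2.8.a}) and (\ref{e.2.8.b}) specialised to a single interior point. First I would fix an arbitrary $t\in\left(0,1\right)$. For the lower bound I would invoke inequality (\ref{e.2.8.a}) with the choice $t_{1}:=t$, which immediately yields
\begin{equation*}
\nabla f_{A}\left( B-A\right) \leq \nabla f_{\left( 1-t\right) A+tB}\left( B-A\right)
\end{equation*}
in the operator order. For the upper bound I would invoke inequality (\ref{e.2.8.b}) with the choice $t_{2}:=t$, obtaining
\begin{equation*}
\nabla f_{\left( 1-t\right) A+tB}\left( B-A\right) \leq \nabla f_{B}\left( B-A\right) .
\end{equation*}

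Combining these two estimates by the transitivity of the operator order produces the desired chain (\ref{e.2.10}). Since $t\in\left(0,1\right)$ was arbitrary, the conclusion holds for all such $t$, completing the argument.

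There is essentially no hard step here: the result is a purely formal consequence of Lemma \ref{l.2.3}. The only point worth noting is that the two flanking inequalities in Lemma \ref{l.2.3} each depend on a \emph{single} interior parameter (the left one on $t_{1}$, the right one on $t_{2}$), so that setting both parameters equal to the common value $t$ is legitimate and simultaneously delivers both ends of the chain. The hypotheses $A\neq B$ and $f\in\mathcal{G}\left(\left[A,B\right]\right)$ are exactly those of Lemma \ref{l.2.3}, so no additional regularity or existence verification for the G\^{a}teaux derivatives is required beyond what that lemma already guarantees.
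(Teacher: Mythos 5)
Your proposal is correct and matches the paper's (implicit) argument: the paper offers no separate proof, presenting Corollary \ref{c.2.1} as an immediate specialisation of Lemma \ref{l.2.3}, exactly as you do via (\ref{e.2.8.a}) and (\ref{e.2.8.b}). The only pedantic caveat is that the lemma's hypothesis reads $0<t_{1}<t_{2}<1$, so rather than ``setting both parameters equal to $t$'' one formally invokes the lemma twice with an auxiliary second parameter (e.g.\ $t_{2}\in\left(t,1\right)$ for the lower bound and $t_{1}\in\left(0,t\right)$ for the upper bound); since each inequality depends on only one parameter, as you observe, this changes nothing of substance.
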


For two \textit{Lebesgue integrable} functions $h,$ $g:\left[ a,b\right]
\rightarrow \mathbb{R}$, consider the \textit{\v{C}eby\v{s}ev functional}:%
\begin{equation}
C\left( h,g\right) :=\frac{1}{b-a}\int_{a}^{b}h(t)g(t)dt-\frac{1}{\left(
b-a\right) ^{2}}\int_{a}^{b}h(t)dt\int_{a}^{b}g(t)dt.  \label{1.1}
\end{equation}

In 1935, Gr\"{u}ss \cite{G} showed that%
\begin{equation}
\left\vert C\left( h,g\right) \right\vert \leq \frac{1}{4}\left( M-m\right)
\left( N-n\right) ,  \label{1.2}
\end{equation}%
provided that there exists the real numbers $m,$ $M,$ $n,$ $N$ such that%
\begin{equation}
m\leq h\left( t\right) \leq M\quad \text{and\quad }n\leq g\left( t\right)
\leq N\text{\quad for a.e. }t\in \left[ a,b\right] .  \label{1.3}
\end{equation}%
The constant $\frac{1}{4}$ is best possible in (\ref{1.1}) in the sense that
it cannot be replaced by a smaller quantity.

We have the following operator inequalities:

\begin{theorem}
\label{t.2.2}Let $f$ be an operator convex function on $I$ and $A,$ $B\in 
\mathcal{SA}_{I}\left( H\right) .$ Assume that $p:\left[ 0,1\right]
\rightarrow \mathbb{R}$ is symmetric and non-decreasing on $\left[ 0,1/2%
\right] $, then we have the operator inequality%
\begin{align}
0& \leq \int_{0}^{1}p\left( t\right) dt\int_{0}^{1}f\left( tA+\left(
1-t\right) B\right) dt-\int_{0}^{1}p\left( t\right) f\left( tA+\left(
1-t\right) B\right) dt  \label{e.3.1} \\
& \leq \frac{1}{4}\left[ p\left( \frac{1}{2}\right) -p\left( 0\right) \right]
\left[ \frac{f\left( A\right) +f\left( B\right) }{2}-f\left( \frac{A+B}{2}%
\right) \right] .  \notag
\end{align}%
If $p:\left[ 0,1\right] \rightarrow \mathbb{R}$ is symmetric and
non-increasing on $\left[ 0,1/2\right] $, then%
\begin{align}
0& \leq \int_{0}^{1}p\left( t\right) f\left( tA+\left( 1-t\right) B\right)
dt-\int_{0}^{1}p\left( t\right) dt\int_{0}^{1}f\left( tA+\left( 1-t\right)
B\right) dtdt  \label{e.3.1.a} \\
& \leq \frac{1}{4}\left[ p\left( 0\right) -p\left( \frac{1}{2}\right) \right]
\left[ \frac{f\left( A\right) +f\left( B\right) }{2}-f\left( \frac{A+B}{2}%
\right) \right] .  \notag
\end{align}
\end{theorem}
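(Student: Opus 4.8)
The plan is to reduce the operator inequality to a scalar one by testing against vectors and then to establish the required scalar refinement of Levin-Ste\v{c}kin's inequality. For $A,B\in \mathcal{SA}_{I}\left( H\right) $ and $x\in H$, set $g_{x}\left( t\right) :=\left\langle f\left( tA+\left( 1-t\right) B\right) x,x\right\rangle =\varphi _{\left( B,A\right) ;x}\left( t\right) $. By Lemma \ref{l.2.1} applied to the pair $\left( B,A\right) $, each $g_{x}$ is a convex function on $\left[ 0,1\right] $, with $g_{x}\left( 0\right) =\left\langle f\left( B\right) x,x\right\rangle $, $g_{x}\left( 1\right) =\left\langle f\left( A\right) x,x\right\rangle $ and $g_{x}\left( 1/2\right) =\left\langle f\left( \frac{A+B}{2}\right) x,x\right\rangle $. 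Since for continuous operator-valued integrands the inner product commutes with the integral, it suffices to prove, for every convex $g:\left[ 0,1\right] \rightarrow \mathbb{R}$ and every symmetric $p$ non-decreasing on $\left[ 0,1/2\right] $, the scalar chain
\begin{equation*}
0 \leq \left(\int_0^1 p\right)\int_0^1 g - \int_0^1 pg \leq \frac{1}{4}\left[p(1/2)-p(0)\right]\left[\frac{g(0)+g(1)}{2}-g(1/2)\right],
\end{equation*}
and then to read off (\ref{e.3.1}) by taking $g=g_{x}$ and letting $x$ range over $H$.

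The left inequality is exactly Levin-Ste\v{c}kin's inequality (Theorem \ref{t.1}) applied to the convex function $g$, so only the right-hand bound requires work. Writing $P:=\int_{0}^{1}p$ and $Q\left( t\right) :=\int_{0}^{t}\left( P-p\left( s\right) \right) ds$, I would first record that $Q\left( 0\right) =Q\left( 1\right) =0$ and, using the symmetry of $p$, that $Q\left( 1-t\right) =-Q\left( t\right) $; in particular $Q\left( 1/2\right) =0$. Since $p$ is non-decreasing on $\left[ 0,1/2\right] $ the integrand $P-p\left( s\right) $ is non-increasing there with mean zero, so $Q\geq 0$ on $\left[ 0,1/2\right] $ and hence $Q\leq 0$ on $\left[ 1/2,1\right] $. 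Integrating by parts, which is legitimate because a convex $g$ is absolutely continuous with non-decreasing derivative $g^{\prime }$, and folding the two halves together via $Q\left( 1-t\right) =-Q\left( t\right) $ yields
\begin{equation*}
\left(\int_0^1 p\right)\int_0^1 g - \int_0^1 pg = -\int_0^1 Q(t) g'(t)\,dt = \int_0^{1/2} Q(t)\left[g'(1-t)-g'(t)\right]dt,
\end{equation*}
where the bracket is non-negative because $g^{\prime }$ is non-decreasing and $1-t\geq t$.

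The crux is the sharp pointwise estimate $0\leq Q\left( t\right) \leq \frac{1}{8}\left[ p(1/2)-p(0)\right] $ on $\left[ 0,1/2\right] $. To obtain it I would locate the maximiser $c$ of $Q$ on $\left[ 0,1/2\right] $, namely the zero of the non-increasing function $P-p$, and use $Q\left( c\right) =\int_{0}^{c}\left( P-p\right) $ together with $\int_{0}^{1/2}\left( P-p\right) =0$ to write $Q\left( c\right) $ also as $\int_{c}^{1/2}\left( p-P\right) $. Monotonicity gives the two one-sided bounds $Q\left( c\right) \leq c\,[P-p(0)]$ and $Q\left( c\right) \leq \left( 1/2-c\right) [p(1/2)-P]$; multiplying them and applying the arithmetic-geometric mean inequality to $c(1/2-c)\leq \frac{1}{16}$ and to $[P-p(0)][p(1/2)-P]\leq \frac{1}{4}[p(1/2)-p(0)]^{2}$ produces $Q(c)^{2}\leq \frac{1}{64}[p(1/2)-p(0)]^{2}$, i.e. the claimed bound. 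Feeding this into the displayed identity and using $\int_{0}^{1/2}[g^{\prime }(1-t)-g^{\prime }(t)]\,dt=g(0)+g(1)-2g(1/2)$ gives the scalar right-hand bound with the constant $\frac{1}{4}$. Finally, specialising to $g=g_{x}$, letting $x$ vary, and recalling that a selfadjoint operator is determined by its quadratic form yields (\ref{e.3.1}); the non-increasing case (\ref{e.3.1.a}) then follows at once by applying (\ref{e.3.1}) to $-p$, which is symmetric and non-decreasing on $\left[ 0,1/2\right] $. The main obstacle is precisely the sharp constant $\frac{1}{8}$ in the estimate for $Q$: the naive chain $Q(c)\leq c[P-p(0)]\leq \frac{1}{2}[p(1/2)-p(0)]$ is off by a factor, and it is the simultaneous use of both one-sided estimates through the AM--GM step that recovers the correct coefficient.
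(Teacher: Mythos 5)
Your proof is correct, but it reaches the upper bound by a genuinely different route than the paper. The paper also reduces to the scalar functions $\varphi_{\left( A,B\right) ;x}\left( t\right) =\left\langle f\left( \left( 1-t\right) A+tB\right) x,x\right\rangle$ and gets the left inequality from Levin--Ste\v{c}kin, exactly as you do; but for the right inequality it introduces the symmetrical transform $\check{\varphi}_{\left( A,B\right) ;x}\left( t\right) =\frac{1}{2}\left[ \varphi_{\left( A,B\right) ;x}\left( t\right) +\varphi _{\left( A,B\right);x}\left( 1-t\right) \right]$, observes that symmetry of $p$ lets one replace $\varphi$ by $\check{\varphi}$ in the \v{C}eby\v{s}ev-type difference, and then invokes Gr\"{u}ss' inequality \eqref{1.2} with the bounds $p\left( 0\right) \leq p\leq p\left( 1/2\right)$ and $\varphi \left( 1/2\right) \leq \check{\varphi}\leq \frac{\varphi \left( 0\right) +\varphi \left( 1\right) }{2}$ (the latter being where convexity plus symmetry of $\check{\varphi}$ enters); the constant $\frac{1}{4}$ is simply Gr\"{u}ss' constant. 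You instead prove the needed scalar estimate from scratch: integration by parts against $Q\left( t\right) =\int_{0}^{t}\left( P-p\left( s\right) \right) ds$, the folding identity over $\left[ 0,1/2\right]$, and the sharp pointwise bound $0\leq Q\leq \frac{1}{8}\left[ p\left( 1/2\right) -p\left( 0\right) \right]$ obtained by multiplying the two one-sided estimates and applying AM--GM twice (and, as you note, this bound in fact holds at every $t$, not only at the maximizer, since both one-sided estimates are valid for arbitrary $c\in \left[ 0,1/2\right]$). Your argument is more elementary and self-contained -- it needs neither the symmetrization device nor Gr\"{u}ss as a black box -- and your derivation of \eqref{e.3.1.a} by applying \eqref{e.3.1} to $-p$ is cleaner than the paper, which merely asserts that case. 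What the paper's framework buys in exchange is modularity: once the problem is cast as a \v{C}eby\v{s}ev functional $C\left( p,\check{\varphi}\right)$, Gr\"{u}ss can be swapped for Ostrowski's inequality \eqref{1.5}, \v{C}eby\v{s}ev's inequality \eqref{1.4}, or Lupa\c{s}' inequality \eqref{1.6}, which is exactly how Theorems \ref{t.2.3} and \ref{t.2.4} are then obtained with no extra work, whereas your kernel argument would have to be redone for each of those variants.
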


\begin{proof}
For $x\in H$ we consider the auxiliary function $\varphi _{\left( A,B\right)
;x}:\left[ 0,1\right] \rightarrow \mathbb{R}$ defined by 
\begin{equation*}
\varphi _{\left( A,B\right) ;x}\left( t\right) :=\left\langle \varphi
_{\left( A,B\right) }\left( t\right) x,x\right\rangle =\left\langle f\left(
\left( 1-t\right) A+tB\right) x,x\right\rangle .
\end{equation*}

Since $p$ is symmetric on $\left[ 0,1\right] ,$ then%
\begin{align*}
& \int_{0}^{1}p\left( t\right) \frac{\varphi _{\left( A,B\right) ;x}\left(
t\right) +\varphi _{\left( A,B\right) ;x}\left( 1-t\right) }{2}dt \\
& =\frac{1}{2}\left[ \int_{0}^{1}p\left( t\right) \varphi _{\left(
A,B\right) ;x}\left( t\right) dt+\int_{0}^{1}p\left( t\right) \varphi
_{\left( A,B\right) ;x}\left( 1-t\right) dt\right] \\
& =\frac{1}{2}\left[ \int_{0}^{1}p\left( t\right) \varphi _{\left(
A,B\right) ;x}\left( t\right) dt+\int_{0}^{1}p\left( 1-t\right) \varphi
_{\left( A,B\right) ;x}\left( 1-t\right) dt\right] .
\end{align*}%
By changing the variable $1-t=s,$ $s\in \left[ 0,1\right] $ we have 
\begin{equation*}
\int_{0}^{1}p\left( 1-t\right) \varphi _{\left( A,B\right) ;x}\left(
1-t\right) dt=\int_{0}^{1}p\left( s\right) \varphi _{\left( A,B\right)
;x}\left( s\right) ds
\end{equation*}%
and then 
\begin{equation*}
\int_{0}^{1}p\left( t\right) \frac{\varphi _{\left( A,B\right) ;x}\left(
t\right) +\varphi _{\left( A,B\right) ;x}\left( 1-t\right) }{2}%
dt=\int_{0}^{1}p\left( t\right) \varphi _{\left( A,B\right) ;x}\left(
t\right) dt.
\end{equation*}%
Also 
\begin{equation*}
\int_{0}^{1}\frac{\varphi _{\left( A,B\right) ;x}\left( t\right) +\varphi
_{\left( A,B\right) ;x}\left( 1-t\right) }{2}dt=\int_{0}^{1}\varphi _{\left(
A,B\right) ;x}\left( t\right) dt.
\end{equation*}%
Therefore 
\begin{align}
& \int_{0}^{1}p\left( t\right) dt\int_{0}^{1}\varphi _{\left( A,B\right)
;x}\left( t\right) dt-\int_{0}^{1}p\left( t\right) \varphi _{\left(
A,B\right) ;x}\left( t\right) dt  \label{e.3.2} \\
& =\int_{0}^{1}p\left( t\right) dt\int_{0}^{1}\check{\varphi}_{\left(
A,B\right) ;x}\left( t\right) dt-\int_{0}^{1}p\left( t\right) \check{\varphi}%
_{\left( A,B\right) ;x}\left( t\right) dt,  \notag
\end{align}%
where 
\begin{equation*}
\check{\varphi}_{\left( A,B\right) ;x}\left( t\right) :=\frac{\varphi
_{\left( A,B\right) ;x}\left( t\right) +\varphi _{\left( A,B\right)
;x}\left( 1-t\right) }{2},\text{ }t\in \left[ 0,1\right]
\end{equation*}%
is the symmetrical transform of $\varphi _{\left( A,B\right) ;x}$ on the
interval $\left[ 0,1\right] .$

Now, if we use the Levin-Ste\v{c}kin's inequality for the symmetric function 
$p$ and the convex function $g=\varphi _{\left( A,B\right) ;x}$, then we
obtain%
\begin{equation}
0\leq \int_{0}^{1}p\left( t\right) dt\int_{0}^{1}\varphi _{\left( A,B\right)
;x}\left( t\right) dt-\int_{0}^{1}p\left( t\right) \varphi _{\left(
A,B\right) ;x}\left( t\right) dt,  \label{e.3.3}
\end{equation}%
for all $x\in H$.

Since, by Lemma \ref{l.2.1}, $\varphi _{\left( A,B\right) ;x}$ is convex,
then $\check{\varphi}_{\left( A,B\right) ;x}$ is symmetric and convex, which
implies that 
\begin{align*}
\varphi _{\left( A,B\right) ;x}\left( \frac{1}{2}\right) & =\check{\varphi}%
_{\left( A,B\right) ;x}\left( \frac{1}{2}\right) \leq \check{\varphi}%
_{\left( A,B\right) ;x}\left( t\right) \\
& \leq \check{\varphi}_{\left( A,B\right) ;x}\left( 1\right) =\frac{\varphi
_{\left( A,B\right) ;x}\left( 0\right) +\varphi _{\left( A,B\right)
;x}\left( 1\right) }{2},\text{ }t\in \left[ 0,1\right] ,
\end{align*}%
for all $x\in H$.

Also $p\left( 0\right) \leq p\left( t\right) \leq p\left( \frac{1}{2}\right)
,$ $t\in \left[ 0,1\right] $ and by Gr\"{u}ss' inequality for $h=p$ and $g=%
\check{\varphi}_{\left( A,B\right) ;x}$ we get 
\begin{align*}
0& \leq \int_{0}^{1}p\left( t\right) dt\int_{0}^{1}\check{\varphi}_{\left(
A,B\right) ;x}\left( t\right) dt-\int_{0}^{1}p\left( t\right) \check{\varphi}%
_{\left( A,B\right) ;x}\left( t\right) dt \\
& \leq \frac{1}{4}\left[ p\left( \frac{1}{2}\right) -p\left( 0\right) \right]
\left[ \frac{\varphi _{\left( A,B\right) ;x}\left( 0\right) +\varphi
_{\left( A,B\right) ;x}\left( 1\right) }{2}-\varphi _{\left( A,B\right)
;x}\left( \frac{1}{2}\right) \right]
\end{align*}%
namely, by (\ref{e.3.2}) and (\ref{e.3.3}) 
\begin{align}
0& \leq \int_{0}^{1}p\left( t\right) dt\int_{0}^{1}\varphi _{\left(
A,B\right) ;x}\left( t\right) dt-\int_{0}^{1}p\left( t\right) \varphi
_{\left( A,B\right) ;x}\left( t\right) dt  \label{e.3.4} \\
& \leq \frac{1}{4}\left[ p\left( \frac{1}{2}\right) -p\left( 0\right) \right]
\left[ \frac{\varphi _{\left( A,B\right) ;x}\left( 0\right) +\varphi
_{\left( A,B\right) ;x}\left( 1\right) }{2}-\varphi _{\left( A,B\right)
;x}\left( \frac{1}{2}\right) \right]  \notag
\end{align}%
for all $x\in H$.

The inequality (\ref{e.3.4}) can be written in terms of inner product as%
\begin{align*}
0& \leq \left\langle \left( \int_{0}^{1}p\left( t\right)
dt\int_{0}^{1}f\left( \left( 1-t\right) A+tB\right) dt\right)
x,x\right\rangle \\
& -\left\langle \left( \int_{0}^{1}p\left( t\right) f\left( \left(
1-t\right) A+tB\right) dt\right) x,x\right\rangle \\
& \leq \frac{1}{4}\left[ p\left( \frac{1}{2}\right) -p\left( 0\right) \right]
\left[ \left\langle \left( \frac{f\left( A\right) +f\left( B\right) }{2}%
\right) x,x\right\rangle -\left\langle f\left( \frac{A+B}{2}\right)
x,x\right\rangle \right]
\end{align*}%
for all $x\in H$, which is equivalent to the operator inequality (\ref{e.3.1}%
).
\end{proof}

\begin{remark}
\label{r.2.1}If $f$ is an operator concave function on $I$ and $A,$ $B\in 
\mathcal{SA}_{I}\left( H\right) $, while $p:\left[ 0,1\right] \rightarrow 
\mathbb{R}$ is symmetric and non-decreasing on $\left[ 0,1/2\right] $, then 
\begin{align}
0& \leq \int_{0}^{1}p\left( t\right) f\left( tA+\left( 1-t\right) B\right)
dt-\int_{0}^{1}p\left( t\right) dt\int_{0}^{1}f\left( tA+\left( 1-t\right)
B\right) dtdt  \label{e.3.5} \\
& \leq \frac{1}{4}\left[ p\left( \frac{1}{2}\right) -p\left( 0\right) \right]
\left[ f\left( \frac{A+B}{2}\right) -\frac{f\left( A\right) +f\left(
B\right) }{2}\right] .  \notag
\end{align}%
Also, in this case, if $p:\left[ 0,1\right] \rightarrow \mathbb{R}$ is
symmetric and non-increasing on $\left[ 0,1/2\right] $, then%
\begin{align}
0& \leq \int_{0}^{1}p\left( t\right) f\left( tA+\left( 1-t\right) B\right)
dt-\int_{0}^{1}p\left( t\right) dt\int_{0}^{1}f\left( tA+\left( 1-t\right)
B\right) dtdt  \label{e.3.6} \\
& \leq \frac{1}{4}\left[ p\left( 0\right) -p\left( \frac{1}{2}\right) \right]
\left[ f\left( \frac{A+B}{2}\right) -\frac{f\left( A\right) +f\left(
B\right) }{2}\right] .  \notag
\end{align}
\end{remark}

The following inequality obtained by Ostrowski in 1970, \cite{O} also holds%
\begin{equation}
\left\vert C\left( h,g\right) \right\vert \leq \frac{1}{8}\left( b-a\right)
\left( M-m\right) \left\Vert g^{\prime }\right\Vert _{\infty },  \label{1.5}
\end{equation}%
provided that $h$ is \textit{Lebesgue integrable} and satisfies (\ref{1.3})
while $g$ is absolutely continuous and $g^{\prime }\in L_{\infty }\left[ a,b%
\right] .$ The constant $\frac{1}{8}$ is best possible in (\ref{1.5}).

We have the following operator inequalities when some differentiability
conditions are imposed.

\begin{theorem}
\label{t.2.3}Let $f$ be an operator convex function on $I$ and $A,$ $B\in 
\mathcal{SA}_{I}\left( H\right) $ while $p:\left[ 0,1\right] \rightarrow 
\mathbb{R}$ is symmetric and non-decreasing on $\left[ 0,1/2\right] $.

\begin{enumerate}
\item[(i)] If $p$ is differentiable on $\left( 0,1\right) $, then%
\begin{align}
0& \leq \int_{0}^{1}p\left( t\right) dt\int_{0}^{1}f\left( tA+\left(
1-t\right) B\right) dt-\int_{0}^{1}p\left( t\right) f\left( tA+\left(
1-t\right) B\right) dt  \label{e.3.7} \\
& \leq \frac{1}{8}\left\Vert p^{\prime }\right\Vert _{\infty }\left[ \frac{%
f\left( A\right) +f\left( B\right) }{2}-f\left( \frac{A+B}{2}\right) \right]
.  \notag
\end{align}

\item[(ii)] If $f\in \mathcal{G}\left( \left[ A,B\right] \right) ,$ then%
\begin{align}
0& \leq \int_{0}^{1}p\left( t\right) dt\int_{0}^{1}f\left( tA+\left(
1-t\right) B\right) dt-\int_{0}^{1}p\left( t\right) f\left( tA+\left(
1-t\right) B\right) dt  \label{e.3.8} \\
& \leq \frac{1}{16}\left[ p\left( \frac{1}{2}\right) -p\left( 0\right) %
\right] \left[ \nabla f_{B}\left( B-A\right) -\nabla f_{A}\left( B-A\right) %
\right] .  \notag
\end{align}
\end{enumerate}
\end{theorem}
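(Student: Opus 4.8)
The plan is to mirror the scalarization strategy already used in the proof of Theorem \ref{t.2.2}: reduce both parts of the statement to scalar \v{C}eby\v{s}ev-functional estimates applied to the convex function $\varphi_{\left(A,B\right);x}\left(t\right)=\left\langle f\left(\left(1-t\right)A+tB\right)x,x\right\rangle$ for an arbitrary $x\in H$, and then reassemble the resulting inner-product inequality into the operator order. As established in Theorem \ref{t.2.2}, for each fixed $x$ the quantity
\begin{equation*}
\int_{0}^{1}p\left(t\right)dt\int_{0}^{1}\varphi_{\left(A,B\right);x}\left(t\right)dt-\int_{0}^{1}p\left(t\right)\varphi_{\left(A,B\right);x}\left(t\right)dt=C\left(p,\check{\varphi}_{\left(A,B\right);x}\right)
\end{equation*}
equals the \v{C}eby\v{s}ev functional of $p$ against the symmetrized profile $\check{\varphi}_{\left(A,B\right);x}$ on $\left[0,1\right]$. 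So both items amount to bounding $C\left(p,\check{\varphi}_{\left(A,B\right);x}\right)$ above, using the differentiability hypothesis, and the nonnegativity is already delivered by Levin-Ste\v{c}kin as in \eqref{e.3.3}.

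For item (i), I would invoke Ostrowski's inequality \eqref{1.5} with $h=\check{\varphi}_{\left(A,B\right);x}$ and $g=p$ on $\left[0,1\right]$, so that $b-a=1$. The role of $\left(M-m\right)$ is played by the oscillation of $\check{\varphi}_{\left(A,B\right);x}$, which by the symmetry-and-convexity argument from Theorem \ref{t.2.2} equals
\begin{equation*}
\check{\varphi}_{\left(A,B\right);x}\left(1\right)-\check{\varphi}_{\left(A,B\right);x}\left(\tfrac{1}{2}\right)=\frac{\varphi_{\left(A,B\right);x}\left(0\right)+\varphi_{\left(A,B\right);x}\left(1\right)}{2}-\varphi_{\left(A,B\right);x}\left(\tfrac{1}{2}\right),
\end{equation*}
while $\left\Vert g'\right\Vert_{\infty}=\left\Vert p'\right\Vert_{\infty}$. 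The factor $\tfrac{1}{8}$ comes directly from Ostrowski's constant. Passing back through the inner-product representation used at the end of the proof of Theorem \ref{t.2.2} then yields \eqref{e.3.7}. The one point to watch is that Ostrowski requires the \emph{absolutely continuous} factor to be the differentiable one, so one must place $p$ in the $g$-slot and $\check{\varphi}$ in the $h$-slot; the symmetry $C\left(h,g\right)=C\left(g,h\right)$ makes this harmless.

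For item (ii), the idea is instead to bound the oscillation of $\check{\varphi}_{\left(A,B\right);x}$ via its derivative, since now $f\in\mathcal{G}\left(\left[A,B\right]\right)$ gives differentiability of $\varphi_{\left(A,B\right);x}$ through Lemma \ref{l.2.2}. By the mean value theorem on $\left[\tfrac{1}{2},1\right]$ the above oscillation is at most $\tfrac{1}{2}\left\Vert\check{\varphi}'_{\left(A,B\right);x}\right\Vert_{\infty}$, and monotonicity of the derivative (Corollary \ref{c.2.1}, scalarized) bounds $\check{\varphi}'_{\left(A,B\right);x}$ on $\left[\tfrac{1}{2},1\right]$ by $\tfrac{1}{2}\bigl[\nabla f_{B}\left(B-A\right)-\nabla f_{A}\left(B-A\right)\bigr]$ in the inner-product sense. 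Combining this control of the oscillation with the Gr\"{u}ss estimate \eqref{1.2}, which contributes the factor $\tfrac{1}{4}\left[p\left(\tfrac{1}{2}\right)-p\left(0\right)\right]$, produces the constant $\tfrac{1}{16}$ and the claimed upper bound, which then lifts to the operator inequality \eqref{e.3.8}. The main obstacle here is tracking the two successive halvings of the oscillation carefully and confirming that the derivative bound for the symmetrized function $\check{\varphi}$ is exactly half the full derivative gap $\nabla f_{B}\left(B-A\right)-\nabla f_{A}\left(B-A\right)$; getting that accounting right is what pins down the constant $\tfrac{1}{16}$ rather than $\tfrac{1}{8}$ or $\tfrac{1}{4}$.
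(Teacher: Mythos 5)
Your proof of part (i) coincides with the paper's: the paper likewise applies Ostrowski's inequality (\ref{1.5}) with $p$ in the differentiable slot and the symmetrized function $\check{\varphi}_{\left( A,B\right) ;x}$ in the bounded slot, its oscillation being $\frac{\varphi _{\left( A,B\right) ;x}\left( 0\right) +\varphi _{\left( A,B\right) ;x}\left( 1\right) }{2}-\varphi _{\left( A,B\right) ;x}\left( \frac{1}{2}\right) $ by convexity and symmetry, and then lifts the scalar inequality to the operator order exactly as in Theorem \ref{t.2.2}. For part (ii), however, you take a genuinely different route that lands on the same constant. The paper stays with Ostrowski: it computes $\left( \check{\varphi}_{\left( A,B\right) ;x}\right) ^{\prime }$ via Lemma \ref{l.2.2} (formula (\ref{e.3.8.a})), uses convexity of $\check{\varphi}_{\left( A,B\right) ;x}$ together with Lemma \ref{l.2.3} and Corollary \ref{c.2.1} to get $\sup_{t}\left\vert \left( \check{\varphi}_{\left( A,B\right) ;x}\right) ^{\prime }\left( t\right) \right\vert \leq \frac{1}{2}\left\langle \left[ \nabla f_{B}\left( B-A\right) -\nabla f_{A}\left( B-A\right) \right] x,x\right\rangle $, and then applies (\ref{1.5}) with $h=p$ and $g=\check{\varphi}_{\left( A,B\right) ;x}$, so the constant arises as $\frac{1}{8}\times \frac{1}{2}=\frac{1}{16}$. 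You instead bound the oscillation of $\check{\varphi}_{\left( A,B\right) ;x}$ by the mean value theorem on $\left[ \frac{1}{2},1\right] $ (factor $\frac{1}{2}$), bound the derivative there by the same gap $\frac{1}{2}\left[ \nabla f_{B}\left( B-A\right) -\nabla f_{A}\left( B-A\right) \right] $, and feed this into Gr\"{u}ss' inequality (\ref{1.2}), giving $\frac{1}{4}\times \frac{1}{2}\times \frac{1}{2}=\frac{1}{16}$. Both arguments are valid: your MVT step is legitimate since $\check{\varphi}_{\left( A,B\right) ;x}$ is continuous on $\left[ \frac{1}{2},1\right] $ and differentiable on $\left( \frac{1}{2},1\right) $ under $f\in \mathcal{G}\left( \left[ A,B\right] \right) $, and the derivative is nonnegative and nondecreasing there, so its supremum is attained at $1-$. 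What your version buys is that part (ii) then needs only Gr\"{u}ss, the same tool already used in Theorem \ref{t.2.2}, at the cost of an extra mean-value argument; the paper's single application of Ostrowski is shorter once the derivative formula (\ref{e.3.8.a}) is in hand, and it also exhibits the bound as a genuine sup-norm estimate rather than an oscillation estimate.
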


\begin{proof}
The inequality (\ref{e.3.7}) follows by (\ref{1.5}) for $g=p$ and $h=\varphi
_{\left( A,B\right) ;x},$ $x\in H$ and proceed like in the proof of Theorem %
\ref{t.2.2}.

Now, by Lemma \ref{l.2.2}%
\begin{align}
& \left( \check{\varphi}_{\left( A,B\right) ;x}\left( t\right) \right)
^{\prime }  \label{e.3.8.a} \\
& =\frac{\left( \varphi _{\left( A,B\right) ;x}\left( t\right) \right)
^{\prime }+\left( \varphi _{\left( A,B\right) ;x}\left( 1-t\right) \right)
^{\prime }}{2}  \notag \\
& =\frac{\left\langle \varphi _{\left( A,B\right) }^{\prime }\left( t\right)
x,x\right\rangle -\left\langle \varphi _{\left( A,B\right) }^{\prime }\left(
1-t\right) x,x\right\rangle }{2}  \notag \\
& =\frac{\left\langle \nabla f_{\left( 1-t\right) A+tB}\left( B-A\right)
x,x\right\rangle -\left\langle \nabla f_{tA+\left( 1-t\right) B}\left(
B-A\right) x,x\right\rangle }{2}  \notag \\
& =\frac{\left\langle \left[ \nabla f_{\left( 1-t\right) A+tB}\left(
B-A\right) -\nabla f_{tA+\left( 1-t\right) B}\left( B-A\right) \right]
x,x\right\rangle }{2}  \notag \\
& =\left\langle \left[ \frac{\nabla f_{\left( 1-t\right) A+tB}\left(
B-A\right) -\nabla f_{tA+\left( 1-t\right) B}\left( B-A\right) }{2}\right]
x,x\right\rangle  \notag
\end{align}%
for all $t\in \left( 0,1\right) $ and any $x\in H$.

Since $\check{\varphi}_{\left( A,B\right) ;x}$ is convex on $\left(
0,1\right) ,$ then 
\begin{equation*}
\left( \check{\varphi}_{\left( A,B\right) ;x}\left( t\right) \right)
_{t=0+}^{\prime }\leq \left( \check{\varphi}_{\left( A,B\right) ;x}\left(
t\right) \right) ^{\prime }\leq \left( \check{\varphi}_{\left( A,B\right)
;x}\left( t\right) \right) _{t=1-}^{\prime },\text{ }t\in \left( 0,1\right)
\end{equation*}%
namely, by Lemma \ref{l.2.3}%
\begin{align*}
& \left\langle \left[ \frac{\nabla f_{A}\left( B-A\right) -\nabla
f_{B}\left( B-A\right) }{2}\right] x,x\right\rangle \\
& \leq \left\langle \left[ \frac{\nabla f_{\left( 1-t\right) A+tB}\left(
B-A\right) -\nabla f_{tA+\left( 1-t\right) B}\left( B-A\right) }{2}\right]
x,x\right\rangle \\
& \leq \left\langle \left[ \frac{\nabla f_{B}\left( B-A\right) -\nabla
f_{A}\left( B-A\right) }{2}\right] x,x\right\rangle
\end{align*}%
for all $t\in \left( 0,1\right) $ and any $x\in H$.

Therefore 
\begin{equation*}
\left\vert \left( \check{\varphi}_{\left( A,B\right) ;x}\left( t\right)
\right) ^{\prime }\right\vert \leq \left\vert \left\langle \left[ \frac{%
\nabla f_{B}\left( B-A\right) -\nabla f_{A}\left( B-A\right) }{2}\right]
x,x\right\rangle \right\vert
\end{equation*}%
for all $t\in \left( 0,1\right) $ and any $x\in H$, which implies that 
\begin{align*}
\sup_{t\in \left( 0,1\right) }\left\vert \left( \check{\varphi}_{\left(
A,B\right) ;x}\left( t\right) \right) ^{\prime }\right\vert & \leq
\left\vert \left\langle \left[ \frac{\nabla f_{B}\left( B-A\right) -\nabla
f_{A}\left( B-A\right) }{2}\right] x,x\right\rangle \right\vert \\
& =\left\langle \left[ \frac{\nabla f_{B}\left( B-A\right) -\nabla
f_{A}\left( B-A\right) }{2}\right] x,x\right\rangle
\end{align*}%
for any $x\in H,$ since by Corollary \ref{c.2.1}, we have $f_{B}\left(
B-A\right) \geq \nabla f_{A}\left( B-A\right) .$

If we use Ostrowski's inequality (\ref{1.5}) for $h=p$ and $g=\check{\varphi}%
_{\left( A,B\right) ;x},$ then we obtain%
\begin{align*}
0& \leq \int_{0}^{1}p\left( t\right) dt\int_{0}^{1}\check{\varphi}_{\left(
A,B\right) ;x}\left( t\right) dt-\int_{0}^{1}p\left( t\right) \check{\varphi}%
_{\left( A,B\right) ;x}\left( t\right) dt \\
& \leq \frac{1}{8}\left[ p\left( \frac{1}{2}\right) -p\left( 0\right) \right]
\sup_{t\in \left( 0,1\right) }\left\vert \left( \check{\varphi}_{\left(
A,B\right) ;x}\left( t\right) \right) ^{\prime }\right\vert \\
& \leq \frac{1}{8}\left[ p\left( \frac{1}{2}\right) -p\left( 0\right) \right]
\left\langle \left[ \frac{\nabla f_{B}\left( B-A\right) -\nabla f_{A}\left(
B-A\right) }{2}\right] x,x\right\rangle ,
\end{align*}%
namely%
\begin{align*}
0& \leq \left\langle \left( \int_{0}^{1}p\left( t\right)
dt\int_{0}^{1}f\left( \left( 1-t\right) A+tB\right) dt-\int_{0}^{1}p\left(
t\right) f\left( \left( 1-t\right) A+tB\right) dt\right) x,x\right\rangle \\
& \leq \frac{1}{8}\left[ p\left( \frac{1}{2}\right) -p\left( 0\right) \right]
\left\langle \left[ \frac{\nabla f_{B}\left( B-A\right) -\nabla f_{A}\left(
B-A\right) }{2}\right] x,x\right\rangle ,
\end{align*}%
which is equivalent to the operator inequality (\ref{e.3.8}).
\end{proof}

Another, however less known result, even though it was obtained by \v{C}eby%
\v{s}ev in 1882, \cite{C}, states that%
\begin{equation}
\left\vert C\left( h,g\right) \right\vert \leq \frac{1}{12}\left\Vert
h^{\prime }\right\Vert _{\infty }\left\Vert g^{\prime }\right\Vert _{\infty
}\left( b-a\right) ^{2},  \label{1.4}
\end{equation}%
provided that $h^{\prime },$ $g^{\prime }$ exist and are continuous on $%
\left[ a,b\right] $ and $\left\Vert h^{\prime }\right\Vert _{\infty
}=\sup_{t\in \lbrack a,b]}\left\vert h^{\prime }\left( t\right) \right\vert
. $ The constant $\frac{1}{12}$ cannot be improved in the general case.

The case of \textit{euclidean norms} of the derivative was considered by A.
Lupa\c{s} in \cite{Lu} in which he proved that%
\begin{equation}
\left\vert C\left( h,g\right) \right\vert \leq \frac{1}{\pi ^{2}}\left\Vert
h^{\prime }\right\Vert _{2}\left\Vert g^{\prime }\right\Vert _{2}\left(
b-a\right) ,  \label{1.6}
\end{equation}%
provided that $h,$ $g$ are absolutely continuous and $h^{\prime },$ $%
g^{\prime }\in L_{2}\left[ a,b\right] .$ The constant $\frac{1}{\pi ^{2}}$
is the best possible.

Further, we have:

\begin{theorem}
\label{t.2.4}Let $f$ be an operator convex function on $I$ and $A,$ $B\in 
\mathcal{SA}_{I}\left( H\right) $ while $p:\left[ 0,1\right] \rightarrow 
\mathbb{R}$ is symmetric and non-decreasing on $\left[ 0,1/2\right] $.

\begin{enumerate}
\item[(i)] If $p$ is differentiable on $\left( 0,1\right) $ and $f\in 
\mathcal{G}\left( \left[ A,B\right] \right) ,$ then%
\begin{align}
0& \leq \int_{0}^{1}p\left( t\right) dt\int_{0}^{1}f\left( tA+\left(
1-t\right) B\right) dt-\int_{0}^{1}p\left( t\right) f\left( tA+\left(
1-t\right) B\right) dt  \label{e.3.9} \\
& \leq \frac{1}{24}\left\Vert p^{\prime }\right\Vert _{\infty }\left[ \nabla
f_{B}\left( B-A\right) -\nabla f_{A}\left( B-A\right) \right] .  \notag
\end{align}

\item[(ii)] If $p$ is differentiable on $\left( 0,1\right) $ with $p^{\prime
}\in L_{2}\left[ 0,1\right] $ and $f\in \mathcal{G}\left( \left[ A,B\right]
\right) ,$ then%
\begin{align}
0& \leq \int_{0}^{1}p\left( t\right) dt\int_{0}^{1}f\left( tA+\left(
1-t\right) B\right) dt-\int_{0}^{1}p\left( t\right) f\left( tA+\left(
1-t\right) B\right) dt  \label{e.3.9.a} \\
& \leq \frac{1}{2\pi ^{2}}\left\Vert p^{\prime }\right\Vert _{2}  \notag \\
& \times \left( \int_{0}^{1}\left\Vert \nabla f_{\left( 1-t\right)
A+tB}\left( B-A\right) -\nabla f_{tA+\left( 1-t\right) B}\left( B-A\right)
\right\Vert ^{2}dt\right) ^{1/2}1_{H}  \notag \\
& \leq \frac{1}{\pi ^{2}}\left\Vert p^{\prime }\right\Vert _{2}\left(
\int_{0}^{1}\left\Vert \nabla f_{\left( 1-t\right) A+tB}\left( B-A\right)
\right\Vert ^{2}dt\right) ^{1/2}1_{H}  \notag
\end{align}%
provided the last integral is finite.
\end{enumerate}
\end{theorem}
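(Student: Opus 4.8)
The plan is to run both parts through exactly the scalar-reduction scheme already used for Theorems~\ref{t.2.2} and~\ref{t.2.3}: fix $x\in H$, pass to the real convex function $\varphi _{\left( A,B\right) ;x}$, and exploit the identity~(\ref{e.3.2}),
\begin{equation*}
\int_{0}^{1}p\left( t\right) dt\int_{0}^{1}\varphi _{\left( A,B\right) ;x}\left( t\right) dt-\int_{0}^{1}p\left( t\right) \varphi _{\left( A,B\right) ;x}\left( t\right) dt=\int_{0}^{1}p\left( t\right) dt\int_{0}^{1}\check{\varphi}_{\left( A,B\right) ;x}\left( t\right) dt-\int_{0}^{1}p\left( t\right) \check{\varphi}_{\left( A,B\right) ;x}\left( t\right) dt,
\end{equation*}
so that the quantity to be estimated is, up to sign, the \v{C}eby\v{s}ev functional $C\left( p,\check{\varphi}_{\left( A,B\right) ;x}\right) $ on $\left[ 0,1\right] $ (here $b-a=1$). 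The left-hand inequality $0\leq \cdots $ is already supplied by the Levin-Ste\v{c}kin step~(\ref{e.3.3}), so in each part only the \textit{upper} bound is new, and it comes from feeding $h=p$ and $g=\check{\varphi}_{\left( A,B\right) ;x}$ into the appropriate \v{C}eby\v{s}ev-functional estimate.

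For part~(i) I would invoke \v{C}eby\v{s}ev's inequality~(\ref{1.4}), which gives
\begin{equation*}
\left\vert C\left( p,\check{\varphi}_{\left( A,B\right) ;x}\right) \right\vert \leq \frac{1}{12}\left\Vert p^{\prime }\right\Vert _{\infty }\sup_{t\in \left( 0,1\right) }\left\vert \left( \check{\varphi}_{\left( A,B\right) ;x}\left( t\right) \right) ^{\prime }\right\vert .
\end{equation*}
The supremum on the right was already produced in the proof of Theorem~\ref{t.2.3}: using the derivative formula~(\ref{e.3.8.a}) together with Lemma~\ref{l.2.3} and Corollary~\ref{c.2.1}, one has, for $\left\Vert x\right\Vert =1$,
\begin{equation*}
\sup_{t\in \left( 0,1\right) }\left\vert \left( \check{\varphi}_{\left( A,B\right) ;x}\left( t\right) \right) ^{\prime }\right\vert \leq \left\langle \left[ \frac{\nabla f_{B}\left( B-A\right) -\nabla f_{A}\left( B-A\right) }{2}\right] x,x\right\rangle .
\end{equation*}
Substituting this bound produces the constant $\frac{1}{12}\cdot \frac{1}{2}=\frac{1}{24}$ in front of $\left\langle \left[ \nabla f_{B}\left( B-A\right) -\nabla f_{A}\left( B-A\right) \right] x,x\right\rangle $; rewriting in inner-product form then yields~(\ref{e.3.9}).

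For part~(ii) I would instead use Lupa\c{s}'s inequality~(\ref{1.6}),
\begin{equation*}
\left\vert C\left( p,\check{\varphi}_{\left( A,B\right) ;x}\right) \right\vert \leq \frac{1}{\pi ^{2}}\left\Vert p^{\prime }\right\Vert _{2}\left\Vert \left( \check{\varphi}_{\left( A,B\right) ;x}\right) ^{\prime }\right\Vert _{2},
\end{equation*}
and estimate the $L_{2}$-norm of the derivative. From~(\ref{e.3.8.a}) and the elementary bound $\left\vert \left\langle Tx,x\right\rangle \right\vert \leq \left\Vert T\right\Vert $ for $\left\Vert x\right\Vert =1$, the pointwise estimate
\begin{equation*}
\left\vert \left( \check{\varphi}_{\left( A,B\right) ;x}\left( t\right) \right) ^{\prime }\right\vert \leq \frac{1}{2}\left\Vert \nabla f_{\left( 1-t\right) A+tB}\left( B-A\right) -\nabla f_{tA+\left( 1-t\right) B}\left( B-A\right) \right\Vert
\end{equation*}
holds, and integrating its square over $\left[ 0,1\right] $ gives the first upper bound in~(\ref{e.3.9.a}), the factor $\frac{1}{2}$ accounting for the constant $\frac{1}{2\pi ^{2}}$. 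The second upper bound I would obtain by Minkowski's inequality in $L_{2}\left[ 0,1\right] $ applied to the two summands, noting that the substitution $t\mapsto 1-t$ identifies $\int_{0}^{1}\left\Vert \nabla f_{tA+\left( 1-t\right) B}\left( B-A\right) \right\Vert ^{2}dt$ with $\int_{0}^{1}\left\Vert \nabla f_{\left( 1-t\right) A+tB}\left( B-A\right) \right\Vert ^{2}dt$; this upgrades the factor $\frac{1}{2}$ to $1$ and produces the stated $\frac{1}{\pi ^{2}}$ bound.

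The step I expect to be the main obstacle is the passage from the scalar estimates, valid for each individual unit vector $x$, back to a genuine \emph{operator} inequality with the scalar right-hand sides multiplied by $1_{H}$. In part~(ii) this requires that the quantity
\begin{equation*}
\left( \int_{0}^{1}\left\Vert \nabla f_{\left( 1-t\right) A+tB}\left( B-A\right) -\nabla f_{tA+\left( 1-t\right) B}\left( B-A\right) \right\Vert ^{2}dt\right) ^{1/2}
\end{equation*}
be independent of $x$ and finite, so one must confirm the strong measurability and square-integrability of the operator-valued map $t\mapsto \nabla f_{\left( 1-t\right) A+tB}\left( B-A\right) $ on $\left( 0,1\right) $ — secured by the monotonicity in Lemma~\ref{l.2.3} and the hypothesis $f\in \mathcal{G}\left( \left[ A,B\right] \right) $, and assumed finite in the statement — and then invoke the fact that a uniform upper bound $c$ on $\left\langle Tx,x\right\rangle $ over all unit $x$ is precisely the operator inequality $T\leq c\,1_{H}$. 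With this in hand, both~(\ref{e.3.9}) and~(\ref{e.3.9.a}) follow in the operator order.
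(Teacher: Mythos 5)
Your proposal is correct and follows essentially the same route as the paper's own proof: the symmetrization identity (\ref{e.3.2}) with the Levin-Ste\v{c}kin step for nonnegativity, \v{C}eby\v{s}ev's inequality (\ref{1.4}) combined with the derivative bound from Theorem \ref{t.2.3} to get the constant $\frac{1}{24}$ in part (i), and Lupa\c{s}'s inequality (\ref{1.6}) with the pointwise estimate of $\left( \check{\varphi}_{\left( A,B\right) ;x}\right) ^{\prime }$ via (\ref{e.3.8.a}), followed by Minkowski's inequality and the substitution $t\mapsto 1-t$, in part (ii). If anything, you are slightly more explicit than the paper in noting that the \v{C}eby\v{s}ev-functional estimates must be applied to the symmetrized function $\check{\varphi}_{\left( A,B\right) ;x}$ rather than to $\varphi _{\left( A,B\right) ;x}$ itself, and in spelling out the passage from the scalar bounds to the operator inequality.
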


\begin{proof}
The inequality (\ref{e.3.9}) follows by (\ref{1.4}) for $h=p$ and $g=\varphi
_{\left( A,B\right) ;x},$ $x\in H$ and proceed like in the proof of Theorem %
\ref{t.2.2}.

From (\ref{e.3.8.a}) we have%
\begin{align}
& \int_{0}^{1}\left[ \left( \check{\varphi}_{\left( A,B\right) ;x}\left(
t\right) \right) ^{\prime }\right] ^{2}dt  \label{e.3.10} \\
& =\int_{0}^{1}\left\vert \left\langle \left[ \frac{\nabla f_{\left(
1-t\right) A+tB}\left( B-A\right) -\nabla f_{tA+\left( 1-t\right) B}\left(
B-A\right) }{2}\right] x,x\right\rangle \right\vert ^{2}dt  \notag \\
& \leq \int_{0}^{1}\left\Vert \left[ \frac{\nabla f_{\left( 1-t\right)
A+tB}\left( B-A\right) -\nabla f_{tA+\left( 1-t\right) B}\left( B-A\right) }{%
2}\right] x\right\Vert ^{2}\left\Vert x\right\Vert ^{2}dt  \notag \\
& \leq \frac{1}{4}\left\Vert x\right\Vert ^{4}\int_{0}^{1}\left\Vert \nabla
f_{\left( 1-t\right) A+tB}\left( B-A\right) -\nabla f_{tA+\left( 1-t\right)
B}\left( B-A\right) \right\Vert ^{2}dt  \notag
\end{align}%
for all $x\in H,$ implying that%
\begin{align}
& \left( \int_{0}^{1}\left[ \left( \check{\varphi}_{\left( A,B\right)
;x}\left( t\right) \right) ^{\prime }\right] ^{2}dt\right) ^{1/2}
\label{e.3.11} \\
& \leq \frac{1}{2}\left\Vert x\right\Vert ^{2}\left( \int_{0}^{1}\left\Vert
\nabla f_{\left( 1-t\right) A+tB}\left( B-A\right) -\nabla f_{tA+\left(
1-t\right) B}\left( B-A\right) \right\Vert ^{2}dt\right) ^{1/2}  \notag
\end{align}%
for all $x\in H.$

By using (\ref{1.6}) for $h=p$ and $g=\varphi _{\left( A,B\right) ;x},$ $%
x\in H,$ we derive%
\begin{align*}
0& \leq \left\langle \left( \int_{0}^{1}p\left( t\right)
dt\int_{0}^{1}f\left( \left( 1-t\right) A+tB\right) dt-\int_{0}^{1}p\left(
t\right) f\left( \left( 1-t\right) A+tB\right) dt\right) x,x\right\rangle  \\
& \leq \frac{1}{2\pi ^{2}}\left\Vert p^{\prime }\right\Vert _{2}\left\Vert
x\right\Vert ^{2}\left( \int_{0}^{1}\left\Vert \nabla f_{\left( 1-t\right)
A+tB}\left( B-A\right) -\nabla f_{tA+\left( 1-t\right) B}\left( B-A\right)
\right\Vert ^{2}dt\right) ^{1/2},
\end{align*}%
which is equivalent to the first inequality in (\ref{e.3.9.a}).

By the triangle inequality, we have 
\begin{align*}
& \left( \int_{0}^{1}\left\Vert \nabla f_{\left( 1-t\right) A+tB}\left(
B-A\right) -\nabla f_{tA+\left( 1-t\right) B}\left( B-A\right) \right\Vert
^{2}dt\right) ^{1/2} \\
& \leq \left( \int_{0}^{1}\left\Vert \nabla f_{\left( 1-t\right) A+tB}\left(
B-A\right) \right\Vert ^{2}dt\right) ^{1/2}+\left( \int_{0}^{1}\left\Vert
\nabla f_{tA+\left( 1-t\right) B}\left( B-A\right) \right\Vert ^{2}dt\right)
^{1/2} \\
& =2\left( \int_{0}^{1}\left\Vert \nabla f_{\left( 1-t\right) A+tB}\left(
B-A\right) \right\Vert ^{2}dt\right) ^{1/2},
\end{align*}%
which proves the last part of (\ref{e.3.9.a}).
\end{proof}

\begin{remark}
\label{r.2.2}If either $p$ is non-increasing on $\left[ 0,1/2\right] $ or $f$
is an operator concave function on $I,$ then the interested reader may state
similar results to the ones in Theorem \ref{t.2.3} and Theorem \ref{t.2.4}.
We omit the details.
\end{remark}

\section{Some Examples}

The function $f\left( t\right) =t^{r}$ is operator convex on $(0,\infty )$
if either $1\leq r\leq 2$ or $-1\leq r\leq 0$. Assume that $p:\left[ 0,1%
\right] \rightarrow \mathbb{R}$ is symmetric and non-decreasing on $\left[
0,1/2\right] $, then we have by (\ref{e.3.1}) the operator inequality%
\begin{align}
0& \leq \int_{0}^{1}p\left( t\right) dt\int_{0}^{1}\left( tA+\left(
1-t\right) B\right) ^{r}dt-\int_{0}^{1}p\left( t\right) \left( tA+\left(
1-t\right) B\right) ^{r}dt  \label{e.4.1} \\
& \leq \frac{1}{4}\left[ p\left( \frac{1}{2}\right) -p\left( 0\right) \right]
\left[ \frac{A^{r}+B^{r}}{2}-\left( \frac{A+B}{2}\right) ^{r}\right]   \notag
\end{align}%
for all $A,$ $B>0.$

Moreover, if $p$ is differentiable on $\left( 0,1\right) $, then by (\ref%
{e.3.7})%
\begin{align}
0& \leq \int_{0}^{1}p\left( t\right) dt\int_{0}^{1}\left( tA+\left(
1-t\right) B\right) ^{r}dt-\int_{0}^{1}p\left( t\right) \left( tA+\left(
1-t\right) B\right) ^{r}dt  \label{e.4.2} \\
& \leq \frac{1}{8}\left\Vert p^{\prime }\right\Vert _{\infty }\left[ \frac{%
A^{r}+B^{r}}{2}-\left( \frac{A+B}{2}\right) ^{r}\right]   \notag
\end{align}%
for all $A,$ $B>0.$

The function $f\left( x\right) =x^{-1}$ is  operator convex on $\left(
0,\infty \right) $, operator G\^{a}teaux differentiable and%
\begin{equation*}
\nabla f_{T}\left( S\right) =-T^{-1}ST^{-1}
\end{equation*}%
for $T,$ $S>0.$

If we use (\ref{e.3.8}), then we get the inequality 
\begin{align}
0& \leq \int_{0}^{1}p\left( t\right) dt\int_{0}^{1}\left( tA+\left(
1-t\right) B\right) ^{-1}dt-\int_{0}^{1}p\left( t\right) \left( tA+\left(
1-t\right) B\right) ^{-1}dt  \label{e.4.2.a} \\
& \leq \frac{1}{16}\left[ p\left( \frac{1}{2}\right) -p\left( 0\right) %
\right] \left[ A^{-1}\left( B-A\right) A^{-1}-B^{-1}\left( B-A\right) B^{-1}%
\right]   \notag
\end{align}%
provided that $p:\left[ 0,1\right] \rightarrow \mathbb{R}$ is symmetric and
non-decreasing on $\left[ 0,1/2\right] $ and $A,$ $B>0.$

Moreover, if $p$ is differentiable on $\left( 0,1\right) $ then by (\ref%
{e.3.9}) we derive%
\begin{align}
0& \leq \int_{0}^{1}p\left( t\right) dt\int_{0}^{1}\left( tA+\left(
1-t\right) B\right) ^{-1}dt-\int_{0}^{1}p\left( t\right) \left( tA+\left(
1-t\right) B\right) ^{-1}d  \label{e.4.2.b} \\
& \leq \frac{1}{24}\left\Vert p^{\prime }\right\Vert _{\infty }\left[
A^{-1}\left( B-A\right) A^{-1}-B^{-1}\left( B-A\right) B^{-1}\right]   \notag
\end{align}%
for $A,$ $B>0.$

If we use the first and last term in (\ref{e.3.9.a}), then we also have 
\begin{align}
0& \leq \int_{0}^{1}p\left( t\right) dt\int_{0}^{1}\left( tA+\left(
1-t\right) B\right) ^{-1}dt-\int_{0}^{1}p\left( t\right) \left( tA+\left(
1-t\right) B\right) ^{-1}d  \label{e.4.2.c} \\
& \leq \frac{1}{\pi ^{2}}\left\Vert p^{\prime }\right\Vert _{2}  \notag \\
& \times \left( \int_{0}^{1}\left\Vert \left( \left( 1-t\right) A+tB\right)
^{-1}\left( B-A\right) \left( \left( 1-t\right) A+tB\right) ^{-1}\right\Vert
^{2}dt\right) ^{1/2}1_{H},  \notag
\end{align}%
provided that $p^{\prime }\in L_{2}\left[ 0,1\right] $ and $A,$ $B>0.$

The logarithmic function $f\left( t\right) =\ln t$ is operator concave on $%
(0,\infty ).$ Assume that $p:\left[ 0,1\right] \rightarrow \mathbb{R}$ is
symmetric and non-decreasing on $\left[ 0,1/2\right] $, then we have by (\ref%
{e.3.1}) the operator inequality%
\begin{align}
0& \leq \int_{0}^{1}p\left( t\right) \ln \left( tA+\left( 1-t\right)
B\right) dt-\int_{0}^{1}p\left( t\right) dt\int_{0}^{1}\ln \left( tA+\left(
1-t\right) B\right) dt  \label{e.4.3} \\
& \leq \frac{1}{4}\left[ p\left( \frac{1}{2}\right) -p\left( 0\right) \right]
\left[ \ln \left( \frac{A+B}{2}\right) -\frac{\ln A+\ln B}{2}\right] , 
\notag
\end{align}%
for all $A,$ $B>0.$

Moreover, if $p$ is differentiable on $\left( 0,1\right) $, then by (\ref%
{e.3.7})%
\begin{align}
0& \leq \int_{0}^{1}p\left( t\right) \ln \left( tA+\left( 1-t\right)
B\right) dt-\int_{0}^{1}p\left( t\right) dt\int_{0}^{1}\ln \left( tA+\left(
1-t\right) B\right) dt  \label{e.4.4} \\
& \leq \frac{1}{8}\left\Vert p^{\prime }\right\Vert _{\infty }\left[ \ln
\left( \frac{A+B}{2}\right) -\frac{\ln A+\ln B}{2}\right] ,  \notag
\end{align}%
for all $A,$ $B>0.$

We note that the function $f(x)=\ln x$ is operator concave on $\left(
0,\infty \right) .$ The $\ln $ function is operator G\^{a}teaux
differentiable with the following explicit formula for the derivative (cf.
Pedersen \cite[p. 155]{P}):%
\begin{equation}
\nabla \ln _{T}\left( S\right) =\int_{0}^{\infty }\left( s1_{H}+T\right)
^{-1}S\left( s1_{H}+T\right) ^{-1}ds  \label{e.4.5}
\end{equation}%
for $T,$ $S>0.$

If we use inequality (\ref{e.3.8}) for $\ln $ we get for $A,$ $B>0,$ 
\begin{align}
0& \leq \int_{0}^{1}p\left( t\right) \ln \left( tA+\left( 1-t\right)
B\right) dt-\int_{0}^{1}p\left( t\right) dt\int_{0}^{1}\ln \left( tA+\left(
1-t\right) B\right) dt  \label{e.4.6} \\
& \leq \frac{1}{16}\left[ p\left( \frac{1}{2}\right) -p\left( 0\right) %
\right] \left[ \int_{0}^{\infty }\left( s1_{H}+B\right) ^{-1}\left(
B-A\right) \left( s1_{H}+B\right) ^{-1}ds\right.  \notag \\
& -\left. \int_{0}^{\infty }\left( s1_{H}+A\right) ^{-1}\left( B-A\right)
\left( s1_{H}+A\right) ^{-1}ds\right] ,  \notag
\end{align}%
provided that $p:\left[ 0,1\right] \rightarrow \mathbb{R}$ is symmetric and
non-decreasing on $\left[ 0,1/2\right] .$

If $p$ is differentiable, then by (\ref{e.3.9}) we 
\begin{align}
0& \leq \int_{0}^{1}p\left( t\right) \ln \left( tA+\left( 1-t\right)
B\right) dt-\int_{0}^{1}p\left( t\right) dt\int_{0}^{1}\ln \left( tA+\left(
1-t\right) B\right) dt  \label{e.4.7} \\
& \leq \frac{1}{24}\left\Vert p^{\prime }\right\Vert _{\infty }\left[
\int_{0}^{\infty }\left( s1_{H}+B\right) ^{-1}\left( B-A\right) \left(
s1_{H}+B\right) ^{-1}ds\right.   \notag \\
& -\left. \int_{0}^{\infty }\left( s1_{H}+A\right) ^{-1}\left( B-A\right)
\left( s1_{H}+A\right) ^{-1}ds\right] ,  \notag
\end{align}%
for $A,$ $B>0.$

A similar inequality can be derive from (\ref{e.3.9.a}), however the details
are omitted.

The interested author can also state the corresponding operator inequalities
for $f\left( t\right) =t\ln t$ that is operator convex on $(0,\infty ).$

Finally, if we take $p\left( t\right) =t\left( 1-t\right) ,$ $t\in \left[ 0,1%
\right] ,$ then we observe that $p$ is symmetric and non-decreasing on $%
\left[ 0,1/2\right] $ and by (\ref{e.4.1}) we obtain   
\begin{align}
0& \leq \frac{1}{6}\int_{0}^{1}\left( tA+\left( 1-t\right) B\right)
^{r}dt-\int_{0}^{1}t\left( 1-t\right) \left( tA+\left( 1-t\right) B\right)
^{r}dt  \label{e.4.8} \\
& \leq \frac{1}{16}\left[ \frac{A^{r}+B^{r}}{2}-\left( \frac{A+B}{2}\right)
^{r}\right]   \notag
\end{align}%
if either $1\leq r\leq 2$ or $-1\leq r\leq 0$ and $A,$ $B>0.$

From (\ref{e.4.2.a}) we derive 
\begin{align}
0& \leq \frac{1}{6}\int_{0}^{1}\left( tA+\left( 1-t\right) B\right)
^{-1}dt-\int_{0}^{1}t\left( 1-t\right) \left( tA+\left( 1-t\right) B\right)
^{-1}dt  \label{e.4.9} \\
& \leq \frac{1}{64}\left[ A^{-1}\left( B-A\right) A^{-1}-B^{-1}\left(
B-A\right) B^{-1}\right]   \notag
\end{align}%
for $A,$ $B>0.$

From (\ref{e.4.3}) we obtain the logarithmic inequality 
\begin{align}
0& \leq \int_{0}^{1}t\left( 1-t\right) \ln \left( tA+\left( 1-t\right)
B\right) dt-\frac{1}{6}\int_{0}^{1}\ln \left( tA+\left( 1-t\right) B\right)
dt  \label{e.4.10} \\
& \leq \frac{1}{16}\left[ \ln \left( \frac{A+B}{2}\right) -\frac{\ln A+\ln B%
}{2}\right] ,  \notag
\end{align}%
while from (\ref{e.4.6}), the inequality 
\begin{align}
0& \leq \int_{0}^{1}t\left( 1-t\right) \ln \left( tA+\left( 1-t\right)
B\right) dt-\frac{1}{6}\int_{0}^{1}\ln \left( tA+\left( 1-t\right) B\right)
dt  \label{e.4.11} \\
& \leq \frac{1}{64}\left[ \int_{0}^{\infty }\left( s1_{H}+B\right)
^{-1}\left( B-A\right) \left( s1_{H}+B\right) ^{-1}ds\right.   \notag \\
& -\left. \int_{0}^{\infty }\left( s1_{H}+A\right) ^{-1}\left( B-A\right)
\left( s1_{H}+A\right) ^{-1}ds\right]   \notag
\end{align}%
for $A,$ $B>0.$


\begin{thebibliography}{99}
\bibitem{AD} R. P. Agarwal and S. S. Dragomir, A survey of Jensen type
inequalities for functions of selfadjoint operators in Hilbert spaces. 
\textit{Comput. Math. Appl. }5\textbf{9} (2010), no. 12, 3785--3812.

\bibitem{BT} V. Bacak, T. Vildan and R. T\"{u}rkmen, Refinements of
Hermite-Hadamard type inequalities for operator convex functions.\textit{\
J. Inequal. Appl.} \textbf{2013}, 2013:262, 10 pp.

\bibitem{C} P. L. Chebyshev, Sur les expressions approximatives des int\`{e}%
grals d\`{e}finis par les outres prises entre les m\^{e}me limites, \textit{%
Proc. Math. Soc. Charkov, }\textbf{2 }(1882), 93-98.

\bibitem{DDT} V. Darvish, S. S. Dragomir, H. M. Nazari and A. Taghavi, Some
inequalities associated with the Hermite-Hadamard inequalities for operator $%
h$-convex functions.\textit{\ Acta Comment. Univ. Tartu. Math.} \textbf{21}
(2017), no. 2, 287--297.

\bibitem{DSSF} S. S. Dragomir, An inequality improving the first
Hermite-Hadamard inequality for convex functions defined on linear spaces
and applications for semi-inner products, \textit{J. Inequal. Pure \& Appl.
Math.}, Volume \textbf{3}, Issue 2, Article 31, 2002. \texttt{[Online
https://www.emis.de/journals/JIPAM/article183.html?sid=183].}

\bibitem{DSSS} S. S. Dragomir, An inequality improving the second
Hermite-Hadamard inequality for convex functions defined on linear spaces
and applications for semi-inner products, \textit{J. Inequal. Pure \& Appl.
Math.} Volume 3, Issue 3, Article 35, 2002. \texttt{[Online
https://www.emis.de/journals/JIPAM/article187.html?sid=187].}

\bibitem{SSDBook} S. S. Dragomir, \textit{Operator Inequalities of the
Jensen, \v{C}eby\v{s}ev and Gr\"{u}ss Type.} Springer Briefs in Mathematics.
Springer, New York, 2012. xii+121 pp. ISBN: 978-1-4614-1520-6.

\bibitem{DSSH} S. S. Dragomir, Hermite-Hadamard's type inequalities for
operator convex functions. \textit{Appl. Math. Comput.} \textbf{218} (2011),
no. 3, 766--772.

\bibitem{DSSHH} S. S. Dragomir, Some Hermite-Hadamard type inequalities for
operator convex functions and positive maps. \textit{Spec. Matrices}\textbf{%
\ 7} (2019), 38--51. Preprint \textit{RGMIA Res. Rep. Coll. }\textbf{19}
(2016), Art. 80. \texttt{[Online http://rgmia.org/papers/v19/v19a80.pdf].}

\bibitem{DSSRF} S. S. Dragomir, Reverses of operator Fejer's inequalities,
Accepted \textit{Tokyo J. Math.,} Preprint \textit{RGMIA Res. Rep. Coll.} 
\textbf{23} (2020), Art, 91 14 pp. \texttt{[Online
https://rgmia.org/papers/v22/v22a91.pdf].}

\bibitem{Fe} L. Fej\'{e}r , \"{U}ber die fourierreihen, II, \textit{Math.
Naturwise. Anz Ungar. Akad. Wiss}. \textbf{24 }(1906) 369--390 .

\bibitem{FMPS} T. Furuta, J. Mi\'{c}i\'{c} Hot, J. Pe\v{c}ari\'{c} and Y.
Seo, \textit{Mond-Pe\v{c}ari\'{c} Method in Operator Inequalities.
Inequalities for Bounded Selfadjoint Operators on a Hilbert Space}, Element,
Zagreb, 2005.

\bibitem{G0} A. G. Ghazanfari, Hermite-Hadamard type inequalities for
functions whose derivatives are operator convex. \textit{Complex Anal. Oper.
Theory} \textbf{10} (2016), no. 8, 1695--1703.

\bibitem{G1} A. G. Ghazanfari, The Hermite-Hadamard type inequalities for
operator $s$-convex functions. \textit{J. Adv. Res. Pure Math.} \textbf{6}
(2014), no. 3, 52--61.

\bibitem{G} G. Gr\"{u}ss, \"{U}ber das Maximum des absoluten Betrages von $%
\frac{1}{b-a}\int_{a}^{b}f(x)g(x)dx-\frac{1}{\left( b-a\right) ^{2}}%
\int_{a}^{b}f(x)dx\int_{a}^{b}g(x)dx$, \textit{Math. Z.} , \textbf{39}%
(1935), 215-226.

\bibitem{LS} V. I. Levin and S. B. Ste\v{c}kin, Inequalities. \textit{Amer.
Math. Soc. Transl.} \textbf{14} (1960), 1--29.

\bibitem{Lu} A. Lupa\c{s}, The best constant in an integral inequality, 
\textit{Mathematica} (Cluj, Romania), \textbf{15}(\textbf{38})(2) (1973),
219-222

\bibitem{M1} P. R. Mercer, A note on the Fej\'{e}r and Levin-Ste\v{c}kin
inequalities. \textit{Anal. Math. }\textbf{43} (2017), no. 1, 99--102.

\bibitem{M2} P. R. Mercer, A note on inequalities due to Clausing and
Levin-Ste\v{c}kin. \textit{J. Math. Inequal.} \textbf{11} (2017), no. 1,
163--166

\bibitem{O} A. M. Ostrowski, On an integral inequality, \textit{Aequat.
Math.,} \textbf{4} (1970), 358-373.

\bibitem{P} G. K. Pedersen, Operator differentiable functions.\textit{\
Publ. Res. Inst. Math. Sci.} \textbf{36} (1) (2000), 139-157.

\bibitem{TDD} A. Taghavi, V. Darvish, H. M. Nazari and S. S. Dragomir,
Hermite-Hadamard type inequalities for operator geometrically convex
functions. \textit{Monatsh. Math.} \textbf{181} (2016), no. 1, 187--203.

\bibitem{VH} M. Vivas Cortez, H. Hern\'{a}ndez and E. Jorge, Refinements for
Hermite-Hadamard type inequalities for operator $h$-convex function. \textit{%
Appl. Math. Inf. Sci. }\textbf{11} (2017), no. 5, 1299--1307.

\bibitem{VH1} M. Vivas Cortez, H. Hern\'{a}ndez and E. Jorge, On some new
generalized Hermite-Hadamard-Fej\'{e}r inequalities for product of two
operator $h$-convex functions. A\textit{ppl. Math. Inf. Sci. }\textbf{11}
(2017), no. 4, 983--992.

\bibitem{W} S.-H. Wang, Hermite-Hadamard type inequalities for operator
convex functions on the co-ordinates. \textit{J. Nonlinear Sci. Appl.} 
\textbf{10} (2017), no. 3, 1116--1125

\bibitem{W1} S.-H. Wang, New integral inequalities of Hermite-Hadamard type
for operator m-convex and $(\alpha ,m)$-convex functions. \textit{J. Comput.
Anal. Appl.} \textbf{22} (2017), no. 4, 744--753.

\bibitem{Wi} A. Witkowski, Inequalities of Levin-Ste\v{c}kin, Clausing and
Chebyshev revisited. \textit{Elem. Math.} \textbf{75} (2020), no. 1, 32--36.
\end{thebibliography}
\end{document}